\let\subsectiontemp\subsection
\renewcommand{\subsection}[1]{ 
    \subsectiontemp{#1} \hfill\vspace{0.5\linespacing} 
} 
\newtheorem{theorem}{Theorem}[section]
\newtheorem{corollary}[theorem]{Corollary}
\newtheorem{lemma}[theorem]{Lemma}
\newtheorem{proposition}[theorem]{Proposition}
\numberwithin{equation}{section}
\numberwithin{figure}{section}
\numberwithin{table}{section}
\newcommand{\lrabs}[1]{\left\lvert #1 \right\lvert}
\newcommand{\lrp}[1]{\left(#1\right)}
\newcommand{\lrb}[1]{\left[#1\right]}
\newcommand{\lrbb}[1]{\left\llbracket#1\right\rrbracket}
\newcommand{\lrBigb}[1]{\Big[#1\Big]}
\newcommand{\lrbiggb}[1]{\bigg[#1\bigg]}
\newcommand{\ZZ}{\mathbb{Z}}
\newcommand{\QQ}{\mathbb{Q}}
\newcommand{\Tr}{\mathrm{Tr}}
\newcommand{\eigv}{\textnormal{eigv}}
\newcommand{\one}{\mathbbm{1}}
\newcommand{\nw}{\mathrm{new}}
\newcommand{\Q}{\mathbb{Q}}
\author[E. Ross]{Erick Ross}
\address[E. Ross]{School of Mathematical and Statistical Sciences\\
Clemson University\\
Clemson, SC 29634}
\email{erickjohnross@gmail.com}
\author[A. van Lidth]{Alexandre van Lidth}
\address[A. van Lidth]{Department of Mathematics and Statistics\\
Amherst College\\
Amherst, MA 01002}
\email{alexandrevanlidth@gmail.com}
\author[M. R. Wolf]{Martha Rose Wolf}
\address[M. R. Wolf]{Department of Mathematics\\
University of Michigan\\
Ann Arbor, MI 48109}
\email{martharose.wolf@gmail.com}
\author[H. Xue]{Hui Xue}
\address[H. Xue]{School of Mathematical and Statistical Sciences\\
Clemson University\\
Clemson, SC 29634}
\email{huixue@clemson.edu}
\subjclass[2020]{Primary 11F25; Secondary 11F72 and 11F11.}
\keywords{Hecke operator; Hecke eigenvalues; Eichler-Selberg trace formula; Sign pattern; Equidistribution}
\title{Proportion of Atkin-Lehner sign patterns and Hecke Eigenvalue Equidistribution}
\begin{document}

\begin{abstract}
    Let $N \ge 1$, $k \ge 2$ even, and $\sigma$ denote a sign pattern for $N$. In this paper, we first determine the exact proportion of forms in $S_k(N)$ and $S_k^\nw(N)$ with a given Atkin-Lehner sign pattern $\sigma$. Then we study the asymptotic behavior of the Hecke operators $T_p$ over the subspaces of $S_k(N)$ and $S_k^{\nw}(N)$ with Atkin-Lehner sign pattern $\sigma$.
    In particular, for the $p$-adic Plancherel measure $\mu_p$, we show that the Hecke eigenvalues for $T_p$ over these subspaces are $\mu_p$-equidistributed as $N+k \to \infty$.
\end{abstract}

\maketitle

\section{Introduction}

For $N \ge 1$ and $k \ge 2$ even, let $S_k(N)$ denote the space of cusp forms of weight $k$ and modular group $\Gamma_0(N)$, and let $S_k^\nw(N) \subseteq S_k(N)$ denote its new subspace.
For $p^r \| N$, let $W_{p^r}$ denote the Atkin-Lehner involutions on these spaces, and recall that these $W_{p^r}$ can be extended multiplicatively to $W_Q$ for all $Q \| N$ (i.e. $Q|N$ where $(Q,N/Q)=1$). 

Now, recall that the $W_Q$ involutions diagonalize $S_k(N)$ and $S_k^\nw(N)$ with eigenvalues $\pm1$.
So, we define the Atkin-Lehner sign pattern spaces by specifying these signs for all $p^r \| N$. To be precise, define a sign pattern $\sigma$ for $N$ to be a multiplicative function on the exact divisors of $N$ such that $\sigma(p^r) = \pm 1$ for all $p^r \| N$. Then we have the decomposition of $S_k(N)$  
\begin{align}
    S_k(N) &= \bigoplus_{\sigma} S_k^\sigma(N), \quad S_k^\sigma(N) := \{ f \in S_k(N) : W_{p^r}f = \sigma(p^r) f \ \text{ for each } p^r \| N\}
\end{align}
(and similarly for $S_k^\nw(N)$).
Note that these sign pattern spaces $S_k^\sigma(N)$ are a finer decomposition of the more well-known Fricke sign spaces $S_k^\pm(N)$, which only specify the ``global" sign (i.e. the sign of the Fricke involution $W_N$). 


Now, a natural question one might ask is how all the possible sign patterns are distributed over the spaces $S_k(N)$ and $S_k^\nw(N)$. For both global signs and local sign patterns, we determine the exact proportion of signs / sign patterns appearing.

In the following theorem,  $\one_{\textit{condition}}$ denotes the indicator function for \textit{condition}, and the notation  ``$A(N,k) \sim B(N)$ as $N+k \to \infty$" means that $A(N,k) / B(N) \to 1$ as $N+k \to \infty$. We also take the convention that $0 \sim 0$, because as discussed shortly, both sides of \eqref{eqn:newspace-sgnpatt-proportion} are identically zero in one particular case (see Theorem \ref{thm:W4-sign}).

\begin{theorem} \label{thm:sgnpatt-proportion}
    Consider $N \ge 1$ and $k \ge 2$ even. Then as $N+k \to \infty$, the proportion of cusp forms with global sign $+1$ is given by
    \begin{align}
        \label{eqn:fullspace-globalsgn-proportion}
        \frac{\dim S_k^+(N)}{\dim S_k(N)} &\sim \frac12 + \frac{\one_{N=1}}{2},  \\
        \label{eqn:newspace-globalsgn-proportion}
        \frac{\dim S_k^{\nw,+}(N)}{\dim S_k^\nw(N)} &\sim
            \frac12+\frac{ \one_{N=\textnormal{cubefree square}}}{2} \prod_{p|N} \frac{-1}{p^2-p-1}.
    \end{align}
    Also, consider sign patterns $\sigma$ for $N$. Then as $N+k \to \infty$, the proportion of cusp forms with sign pattern $\sigma$ is given by
    \begin{align}
        \label{eqn:fullspace-sgnpatt-proportion}
        \frac{\dim S_k^\sigma(N)}{\dim S_k(N)} &\sim \frac{1}{2^{\omega(N)}},  \\
        \label{eqn:newspace-sgnpatt-proportion}
        \frac{\dim S_k^{\nw,\sigma}(N)}{\dim S_k^\nw(N)} &\sim 
        \frac{1}{2^{\omega(N)}} \prod_{p^r||N} \lrp{ 1 + \sigma(p^r) \frac{- \one_{r=2}}{p^2 - p -1}}.
    \end{align}
\end{theorem}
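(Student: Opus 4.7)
The plan is to reduce each dimension $\dim V^\sigma$ (for $V\in\{S_k(N),S_k^{\nw}(N)\}$) to a short character-sum over traces of Atkin-Lehner operators, and then feed in asymptotic estimates for those traces. Concretely, since the $W_{p^r}$ with $p^r\|N$ are commuting involutions, the orthogonal projector onto the joint $\sigma$-eigenspace equals
\begin{align}
P_\sigma \;=\; \prod_{p^r\|N}\frac{I+\sigma(p^r)\,W_{p^r}}{2} \;=\; \frac{1}{2^{\omega(N)}}\sum_{Q\|N}\sigma(Q)\,W_Q.
\end{align}
Taking the trace on $V$ yields
\begin{align}
\dim V^\sigma \;=\; \frac{1}{2^{\omega(N)}}\sum_{Q\|N}\sigma(Q)\,\Tr(W_Q\mid V),
\end{align}
and the global-sign formulas \eqref{eqn:fullspace-globalsgn-proportion}--\eqref{eqn:newspace-globalsgn-proportion} are the specialization in which only $Q=1$ and $Q=N$ appear.

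For the full space \eqref{eqn:fullspace-sgnpatt-proportion}, I would invoke the Eichler-Selberg trace formula for $W_Q$ on $S_k(N)$. The only $Q$ whose trace contains an ``identity'' main term is $Q=1$, contributing $\dim S_k(N)\asymp (k-1)\psi(N)/12$. For every $Q>1$ only elliptic (and parabolic) terms survive, and one checks directly that $\Tr(W_Q\mid S_k(N))=o(\dim S_k(N))$ as $N+k\to\infty$, uniformly enough to handle the $Q$-sum. Dividing by $\dim S_k(N)$ kills every $Q>1$ term, giving the proportion $2^{-\omega(N)}$; the $N=1$ anomaly in \eqref{eqn:fullspace-globalsgn-proportion} is just the observation that for $N=1$ we have $W_1=W_N=I$, so the ``correction'' $\Tr(W_N)/\dim$ equals $1$ rather than $o(1)$.

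For the new-subspace formulas \eqref{eqn:newspace-globalsgn-proportion} and \eqref{eqn:newspace-sgnpatt-proportion}, the task is to prove that, for each $Q\|N$,
\begin{align}
\frac{\Tr(W_Q\mid S_k^{\nw}(N))}{\dim S_k^{\nw}(N)} \;\sim\; \prod_{p\mid Q}\frac{-\one_{\,p^2\|N}}{p^2-p-1}
\end{align}
(with the right side equal to $1$ when $Q=1$, and equal to $0$ whenever some $p\mid Q$ has $p^r\|N$ with $r\ne 2$). To extract this I would apply M\"obius inversion to the oldform decomposition $S_k(N)=\bigoplus_{M\mid N}\bigoplus_{d\mid N/M}\alpha_d\, S_k^{\nw}(M)$, writing
\begin{align}
\Tr(W_Q\mid S_k^{\nw}(N))\;=\;\sum_{M\mid N}\gamma(N,M,Q)\,\Tr(W_Q\mid S_k(M)),
\end{align}
where the coefficients $\gamma(N,M,Q)$ are multiplicative in the primes of $N$ and are determined locally by how the level-raising maps $\alpha_d$ intertwine with $W_Q$. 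The asymptotics of Step 2 then zero out every $\Tr(W_Q\mid S_k(M))$ with $Q>1$, leaving only the $Q=1$ (identity) contributions, which yield a weighted sum of the dimensions $\dim S_k(M)\asymp (k-1)\psi(M)/12$.

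The main obstacle is the local combinatorics at each prime $p\mid N$. Writing $p^r\|N$, I must compute the local factor
\begin{align}
c_p(r,\sigma(p^r)) \;:=\; \lim_{k\to\infty}\frac{1}{\dim S_k^{\nw}(p^r)}\sum_{Q_p\in\{1,p^r\}}\sigma(Q_p)\Tr(W_{Q_p}\mid S_k^{\nw}(p^r))
\end{align}
and show it equals $1+\sigma(p^r)\,\frac{-\one_{r=2}}{p^2-p-1}$. For $r=1$ and $r\ge 3$ the trace of $W_{p^r}$ on $S_k^{\nw}(p^r)$ is lower-order, yielding $c_p=1$. The interesting case is $r=2$: there the oldforms contribute from level $M\in\{1,p,p^2\}$ with $p^2$-level raising weights, and a careful computation of $\gamma(p^2,M,p^2)$ together with the ratios $\dim S_k(M)/\dim S_k^{\nw}(p^2)\sim 12/((p^2-p-1)(k-1))$ produces precisely the factor $-1/(p^2-p-1)$. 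Multiplying these $c_p$ across $p\mid N$ and combining with \eqref{eqn:newspace-sgnpatt-proportion} (to which the general sign pattern reduces by multiplicativity of $\sigma$ and $\Tr(W_Q)$) gives the theorem, with the indicator $\one_{N=\text{cubefree square}}$ in \eqref{eqn:newspace-globalsgn-proportion} appearing exactly when $r=2$ at every prime.
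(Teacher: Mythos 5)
Your reduction via the projector $\frac{1}{2^{\omega(N)}}\sum_{Q\|N}\sigma(Q)W_Q$ is exactly the paper's Lemma \ref{lem:martin-identity} specialized to $m=1$, and your target asymptotic $\Tr(W_Q\mid S_k^{\nw}(N))/\dim S_k^{\nw}(N)\to\prod_{p^r\|Q}\frac{-\one_{r=2}}{p^2-p-1}$ is precisely the function $\eta(Q)$ of Proposition \ref{prop:tr-estimate}, so the skeleton and the answers you are aiming for are correct. Where you genuinely diverge is in how the new-space traces are produced: the paper applies the Skoruppa--Zagier formula for $\Tr\,T_m'{\circ}W_Q$, in which the passage to the new subspace is already encoded by the multiplicative function $\alpha$, and then estimates the $s'_{k,N}(m,Q)$ terms (Lemma \ref{lem:s-k-N-estimates}); you instead propose Eichler--Selberg-type bounds for $\Tr(W_Q\mid S_k(M))$ on full spaces, combined with the oldform decomposition and an inversion over levels, plus a local computation at each prime. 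That is essentially Martin's method in \cite{kimball-2018-refined-dim-formula}, and it does work; its advantage is that it exposes the structural source of the $r=2$ bias (the ``middle'' old vector $f(pz)$ coming from level prime to $p$ is fixed by $W_{p^2}$, contributing $\pm\dim S_k(M)$, which the inversion converts into a term of size $\tfrac{k-1}{12}$ on the new space), whereas the paper's route delivers the uniform error term $O(N^{1/2}\sigma_0(N)^2\log N)$ with less intertwining bookkeeping.

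Two places in your plan still need real content. First, the displayed inversion identity is not literally correct: $W_Q$ does not act on $S_k(M)$ for a general divisor $M\mid N$, and the trace of the level-$N$ operator $W_Q$ on the old block attached to a newform $f$ of level $M$ is a product of local factors equal to $0$, $\pm1$, or $\pm$ the $W_{p^s}$-eigenvalue of $f$ according to the parity of $r-s$ (where $p^r\|N$, $p^s\|M$); so before inverting you get combinations of $\Tr(W_{Q'}\mid S_k^{\nw}(M))$ for specific $Q'\|M$ determined by $(N,M,Q)$, not of $\Tr(W_Q\mid S_k(M))$, and supplying this bookkeeping is exactly the work your sketch defers. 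Second, ``one checks directly that $\Tr(W_Q\mid S_k(N))=o(\dim S_k(N))$'' must be made quantitative and uniform in $k$, in $Q\|N$, and against the $O(N^{\varepsilon})$ levels $M$ appearing in the inversion (the paper's Lemma \ref{lem:s-k-N-estimates} does this via class-number bounds, giving $O(N^{1/2}\sigma_0(N)\log N)$ independent of $k$); without such a bound the $N\to\infty$ regime of $N+k\to\infty$ is not covered. Finally, a small slip: $\dim S_k(M)/\dim S_k^{\nw}(p^2)$ is not $\sim 12/((p^2-p-1)(k-1))$ (for $M=1$ it is $\sim 1/(p^2-p-1)$), though the factor $-1/(p^2-p-1)$ you extract from the $r=2$ computation is the correct one.
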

We note that this theorem predicts an equal proportion of both signs (and more generally, all sign patterns) in many cases. Over the full space, both signs (respectively, all sign patterns) will have equal proportion in $S_k(N)$ for all $N \ge 2$, by \eqref{eqn:fullspace-globalsgn-proportion} and \eqref{eqn:fullspace-sgnpatt-proportion}. Over the newspace, both global signs (respectively, all sign patterns) will have equal proportion in $S_k^\nw(N)$ for squarefree $N \ge 2$, by \eqref{eqn:newspace-globalsgn-proportion} and \eqref{eqn:newspace-sgnpatt-proportion}. This equiproportion property for the case of newspaces of squarefree level was also shown by Iwaniec-Luo-Sarnak \cite[Equation (2.73)]{ILS} (respectively, Martin \cite[Corollary 3.4]{kimball-2018-refined-dim-formula}). 

The paper \cite{kimball-2023-root-number-bias} recently investigated small biases in global sign, and claimed that each global sign would appear for $50\%$ of newforms as $N+k \to \infty$ \cite[Section 1.3]{kimball-2023-root-number-bias}. However, as one can see from \eqref{eqn:newspace-globalsgn-proportion}, this is not quite accurate; the global sign in $S_k^\nw(N)$ will actually be asymptotically biased towards $(-1)^{\omega(N)}$ whenever $N$ is a cubefree square. One can see this bias numerically from the data calculated in \cite{ross-code}.

We would also like to point out that Martin \cite{kimball-2025-distribution-local-signs} recently investigated small biases in the sign of $W_{p^r}$ for a single fixed prime power $p^r$, and observed a stronger bias at $r=2$. Here, Theorem \ref{thm:sgnpatt-proportion} quantifies this observation by establishing the exact proportion of each sign pattern. 

Now, observe that the proportions \eqref{eqn:fullspace-sgnpatt-proportion} and  \eqref{eqn:newspace-sgnpatt-proportion} are always nonzero, except when $4 \| N$ with $\sigma(4)=+1$ over the newspace $S_k^\nw(N)$. For this reason, we define the notion of ``admissible" sign patterns, where all sign patterns are admissible except for the one case of $4 \| N$ with $\sigma(4)=+1$ over the newspace. 
It is worth noting here that the admissible sign patterns $\sigma$ all have proportion within a factor of $3$ of $1 / 2^{\omega(N)}$. In other words, the admissible sign patterns are not too far off from being equally proportioned. This ``factor of $3$" comes from the lower and upper bounds on the $\prod_{p^r||N} \lrp{ 1 + \sigma(p^r) \frac{- \one_{r=2}}{p^2 - p -1}}$ factor from \eqref{eqn:newspace-sgnpatt-proportion} for admissible $\sigma$: 
\begin{align}
    0.71546 \le \prod_{p \ne 2} \lrp{1-\frac{1}{p^2-p-1}} &\le \prod_{p^r \| N} \lrp{1 + \sigma(p^r) \frac{-\one_{r=2}}{p^2-p-1}} \\
    &\le \prod_p \lrp{1+\frac{1}{p^2-p-1}} \le 2.67412.
    \label{eqn:factor-of-5}
\end{align}

It turns out that for inadmissible sign patterns $\sigma$, there are no newforms at all with sign pattern $\sigma$.
This is because $W_4$ always has Atkin-Lehner sign $-1$ over $S_k^\nw(N)$. This result was originally shown by Atkin and Lehner in \cite[Theorem 7]{atkin-lehner}. We give an alternative proof here based on the trace formula. 
\begin{theorem}\label{thm:W4-sign}
    Let $N \ge 1$ with $4 \| N$. Then $W_4 f = -f$ for every  $f \in S_k^\nw(N)$.
\end{theorem}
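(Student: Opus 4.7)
The plan is to apply the Eichler--Selberg trace formula to prove the identity
\[
\Tr\bigl(W_4 \mid S_k^{\nw}(N)\bigr) \;=\; -\dim S_k^{\nw}(N).
\]
Since $W_4$ is an involution on $S_k^{\nw}(N)$ with eigenvalues $\pm 1$, the $+1$-eigenspace has dimension $\tfrac{1}{2}\bigl(\dim S_k^{\nw}(N) + \Tr(W_4 \mid S_k^{\nw}(N))\bigr)$, so this identity forces the $+1$-eigenspace to be trivial, whence $W_4 = -I$ on $S_k^{\nw}(N)$.

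Writing $N = 4M$ with $M$ odd, the first step is to reduce from the newspace to the full cuspidal spaces via standard M\"obius-style newform inversion. Because $W_4$ is a well-defined involution on $S_k(4L)$ for every $L \mid M$ and is Hecke-equivariant, hence respects the new/old decomposition, both $\dim S_k^{\nw}(4M)$ and $\Tr(W_4 \mid S_k^{\nw}(4M))$ become explicit finite linear combinations of full-space quantities of the form $\dim S_k(D)$ (over $D \mid 4M$) and $\Tr(W_4 \mid S_k(4L))$ (over $L \mid M$). The claimed identity on the newspace thus reduces to an analogous identity among these full-space quantities.

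The second step is the explicit evaluation of both sides. The dimension $\dim S_k(N)$ on $\Gamma_0(N)$ is given by the classical formula as an index term plus elliptic corrections at discriminants $-3$ and $-4$ and a cusp correction. The trace $\Tr(W_4 \mid S_k(4L))$ is obtained from the Atkin--Lehner--twisted Eichler--Selberg formula as a finite sum over $\Gamma_0(4L)$-conjugacy classes of integer matrices of determinant $4$ whose lower-left entry is divisible by $4L$. Because $\det W_4 = 4$ is a perfect square, the hyperbolic contribution vanishes and the elliptic classes are confined to a short list of small discriminants (essentially $-3$, $-4$, and $-16$).

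The final step is to match the two expressions and verify the exact cancellation. The main obstacle I anticipate is the careful local bookkeeping at the prime $2$: enumerating the relevant $\Gamma_0(4L)$-orbits, computing their class-number-weighted contributions, and checking that they precisely cancel the elliptic corrections in the dimension formula. The odd primes dividing $L$ should contribute only a common multiplicative factor on both sides, so the substance of the identity concentrates at the prime $2$ and reduces to a finite combinatorial check. Once this matching is complete, applying the newform inversion from the first step immediately yields the desired identity on $S_k^{\nw}(N)$, and the theorem follows.
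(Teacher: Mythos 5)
Your overall strategy is essentially the paper's: prove the trace identity $\Tr_{S_k^\nw(N)}W_4=-\dim S_k^\nw(N)$ by combining a M\"obius-type newform inversion with an Atkin--Lehner--twisted trace formula and verifying an exact cancellation concentrated at the prime $2$. (The paper does this in one stroke via the Skoruppa--Zagier formula for $\Tr T'_m{\circ}W_Q$ over both $S_k$ and $S_k^\nw$, whose weight function $\alpha$ and building blocks $s'_{k,N'}$ already package your first and second steps.) However, two of your intermediate assertions are not correct as stated and would derail the computation if taken literally.

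First, ``Hecke-equivariant, hence respects the new/old decomposition'' does not by itself reduce $\Tr(W_4\mid S_k^\nw(4M))$ to the quantities $\dim S_k(D)$ and $\Tr(W_4\mid S_k(4L))$: you must also compute the trace of $W_4$ on the oldclasses attached to newforms $f$ of level $D$ with $4\nmid D$. For $2\nmid D$ the operator interchanges (multiples of) $f(z)$ and $f(4z)$ and fixes the middle vector $f(2z)$, giving trace $+1$ per such class, while for $2\| D$ the action in the $2$-direction is anti-diagonal and the trace is $0$; only with these values does the inversion yield the claimed linear combination of full-space data. Second, the claim that ``because $\det W_4=4$ is a perfect square, the hyperbolic contribution vanishes'' has the logic backwards: it is precisely because $4$ is a square that the degenerate classes with $t^2=4\det$ (discriminant $0$) occur, and these, together with the gcd/parabolic and $k=2$ cusp corrections (the $H_\bullet(0)$, $B(L)$, and $\one_{k=2}$ terms in the paper's expressions for $s'_{k,L}(1,1)$, $s'_{k,2L}(1,2)$, $s'_{k,4L}(1,4)$), are exactly what carries and cancels the main term of size $\tfrac{k-1}{12}\psi^\nw(N)$; a sum of elliptic terms at discriminants $-3,-4,-16$ alone is $O(N^{1/2+\varepsilon})$ and could never balance $-\dim S_k^\nw(N)$. (The genuine reason no hyperbolic classes contribute is different: matrices in the $W_4$-coset have trace divisible by $4$, ruling out $t=\pm5$; also discriminant $-8$ enters the bookkeeping.) If you repair these two points, the remaining finite check at the prime $2$ is exactly the term-by-term cancellation the paper carries out.
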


Now, recall that the Hecke operators $T_m$ for $(m,N)=1$ commute with the Atkin-Lehner involutions $W_Q$ \cite[Proposition 13.2.6]{CS} (see \cite{CS} for the precise definition of these Hecke operators $T_m$). This means that the Hecke operators preserve the sign pattern spaces $S_k^\sigma(N)$ and $S_k^{\nw,\sigma}(N)$. So for the admissible sign patterns $\sigma$, we can then investigate how the Hecke eigenvalues over $S_k^\sigma(N)$ and $S_k^{\nw,\sigma}(N)$ are distributed. For $m \ge 1$, let $T_m' := m^{-(k-1)/2} T_m$ denote the normalized Hecke operators (over both $S_k^\sigma(N)$ and $S_k^{\nw,\sigma}(N)$). Then for a fixed prime $p$, let $\displaystyle \mu_p(x) :=\frac{p+1}{\pi}\frac{(1-x^2/4)^{1/2}}{(p^{1/2}+p^{-1/2})^2-x^2}dx$ denote the $p$-adic Plancherel measure. We will show that the eigenvalues of $T_p'$ (over both $S_k^\sigma(N)$ and $S_k^{\nw,\sigma}(N)$) are $\mu_p$-equidistributed as $N+k\to\infty$.

\begin{theorem} \label{thm:main-theorem}
    Fix $p$ prime. Consider $N \ge 1$ coprime to $p$, $\sigma$ sign patterns for $N$, and $k \ge 2$ even. Then the Hecke eigenvalues of $T'_p |_{S_k^\sigma(N)}$ are $\mu_p$-equidistributed as $N+k \to \infty$. In other words, for any sequence
    $\{(N_i,k_i,\sigma_i)\}_{i \ge 1}$ 
    with $N_i + k_i \to \infty$, the collection of Hecke eigenvalues $\eigv(T'_m |_{S_{k_i}^{\sigma_i}(N_i)})$ is $\mu_p$-equidistributed as $i \to \infty$.
    Moreover, if we assume it is not the case that $4 \| N$ with $\sigma(4)=+1$, then the Hecke eigenvalues of $T'_p |_{S_k^{\nw,\sigma}(N)}$ are also $\mu_p$-equidistributed as $N+k \to \infty$.
\end{theorem}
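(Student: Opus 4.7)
The strategy is the moment method applied to the Eichler--Selberg trace formula. Since the Hecke relation $T_{p^{n+1}}=T_pT_{p^n}-p^{k-1}T_{p^{n-1}}$ becomes $T'_{p^{n+1}}=T'_pT'_{p^n}-T'_{p^{n-1}}$ after normalization, one has $T'_{p^n}=U_n(T'_p/2)$, where $U_n$ is the Chebyshev polynomial of the second kind. Consequently, $\mu_p$-equidistribution of $\eigv(T'_p|_V)$ on $[-2,2]$ is equivalent to showing, for every fixed $n\geq 1$,
\begin{align}
    \frac{\Tr(T'_{p^n}|_V)}{\dim V}\ \longrightarrow\ \int_{-2}^{2} U_n(x/2)\,d\mu_p(x)\quad\text{as } N+k\to\infty,
\end{align}
with $V = S_k^\sigma(N)$ or $V = S_k^{\nw,\sigma}(N)$.

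The sign-pattern isotypic component can be extracted via character orthogonality on $\{\pm 1\}^{\omega(N)}$:
\begin{align}
    \Tr\bigl(T'_{p^n}|_{S_k^\sigma(N)}\bigr) = \frac{1}{2^{\omega(N)}}\sum_{Q\|N}\sigma(Q)\,\Tr\bigl(T'_{p^n}W_Q|_{S_k(N)}\bigr),
\end{align}
and analogously on the newspace. Combined with Theorem \ref{thm:sgnpatt-proportion}, which yields $\dim S_k^\sigma(N)\sim \dim S_k(N)/2^{\omega(N)}$ (and a similar asymptotic for $\dim S_k^{\nw,\sigma}(N)$ up to the bounded factor in \eqref{eqn:factor-of-5}), the argument splits into two claims. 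First, the $Q=1$ summand produces $\Tr(T'_{p^n}|_{S_k(N)})/\dim S_k(N) \to \int U_n(x/2)\,d\mu_p(x)$, which is the classical $\mu_p$-equidistribution theorem for the full cusp form space due to Serre, and can be rederived directly from the ordinary trace formula. Second, one needs $\Tr(T'_{p^n}W_Q|_{S_k(N)})=o(\dim S_k(N))$ for every $Q>1$ with $Q\|N$.

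For this second claim, I would invoke the twisted Eichler--Selberg trace formula for $T_{p^n}W_Q$ on $S_k(N)$, expressing the trace as a sum of elliptic, hyperbolic, and parabolic orbital contributions. The key structural point is that the identity-type term of size $\asymp (k-1)\psi(N)\asymp\dim S_k(N)$ that dominates $\Tr(T_{p^n}|_{S_k(N)})$ has no analog when $Q>1$, because $W_Q$ is not scalar on the geometric side. The surviving terms are controlled by Hurwitz--Kronecker class numbers and divisor sums, yielding a bound $O_{p,n}\bigl((Nk)^{1-\delta}\bigr)$ uniformly in $Q\|N$ for some $\delta>0$. The newspace version then follows by Möbius inclusion--exclusion over divisors of $N$, which expresses $\Tr(T'_{p^n}W_Q|_{S_k^\nw(N)})$ as a signed combination of full-space traces $\Tr(T'_{p^n}W_Q|_{S_k(M)})$ for $M\mid N$; the inadmissible case $4\|N$, $\sigma(4)=+1$ needs no separate treatment because Theorem \ref{thm:W4-sign} already forces $S_k^{\nw,\sigma}(N)=\{0\}$.

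The main obstacle is producing a bound on $\Tr(T_{p^n}W_Q|_{S_k(N)})$ that is uniform enough in $Q$, $N$, and $k$ to survive summation over the $2^{\omega(N)}$ exact divisors of $N$ after normalization by $\dim S_k(N)$, and likewise to propagate through the newspace inclusion--exclusion. This requires a careful enumeration of the local orbital contributions at each prime dividing $Q$ together with sharp class number estimates; once these uniform bounds are in hand, the ingredients above combine to deliver $\mu_p$-equidistribution on both $S_k^\sigma(N)$ and $S_k^{\nw,\sigma}(N)$.
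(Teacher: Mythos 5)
Your treatment of the full space is essentially the paper's argument: Chebyshev moments plus character orthogonality over the exact divisors, with the key input that $\Tr(T'_{p^n}W_Q|_{S_k(N)})$ has a main term only at $Q=1$ and is $O(N^{1/2+\varepsilon})$ otherwise (the paper gets this from the Skoruppa--Zagier formula rather than a twisted Eichler--Selberg formula, but that is the same content). The gap is in your newspace step. It is simply not true that only $Q=1$ contributes a main term on the newspace: for $Q\|N$ a product of primes with $p^2\|N$ (taken to the exact power $p^2$), one has
\begin{equation}
\Tr_{S_k^{\nw}(N)}\,T'_{p^n}{\circ}W_Q \;=\; \frac{\one_{p^n=\square}}{p^{n/2}}\,\frac{k-1}{12}\,\psi^{\nw}(N)\,\eta(Q)\;+\;O\!\left(N^{1/2+\varepsilon}\right),
\qquad \eta(q^2)=\frac{-1}{q^2-q-1},
\end{equation}
so these twisted traces are comparable in size to $\dim S_k^{\nw}(N)$, not error terms. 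These are precisely the terms that produce the factor $\prod_{p^r\|N}\bigl(1+\sigma(p^r)\frac{-\one_{r=2}}{p^2-p-1}\bigr)$ in the numerator moments, which must cancel against the identical factor in $\dim S_k^{\nw,\sigma}(N)$ from Theorem \ref{thm:sgnpatt-proportion}. If, as your plan states, you treat every $Q>1$ contribution as $o(\dim)$ and pass to the newspace by inclusion--exclusion, then for non-squarefree $N$ (say $9\|N$) your even moments over $S_k^{\nw,\sigma}(N)$ would come out as $p^{-n/2}$ divided by that product factor rather than $p^{-n/2}$, and the proof fails exactly in the cases where the sign-pattern bias exists.

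Relatedly, the inclusion--exclusion you invoke is not the naive one. The old/new relation for $T_m{\circ}W_Q$ does not express $\Tr(T'_{p^n}W_Q|_{S_k^{\nw}(N)})$ as a M\"obius-signed combination of $\Tr(T'_{p^n}W_Q|_{S_k(M)})$ with the same $W_Q$ at every level: the Atkin--Lehner parameter degenerates to $(Q,N')$ at level $N'\mid N$, and the correct coefficients are the multiplicative function $\alpha$ with $\alpha(p)=\alpha(p^2)=-1$, $\alpha(p^3)=1$, $\alpha(p^{\ge4})=0$ (Proposition \ref{prop:S-Z-formula}), not $\mu$ or $\mu*\mu$. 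It is exactly the levels $N'$ with $(Q,N')=1$, which your version omits, that feed the full-space $Q'=1$ main term into the newspace trace and generate $\psi^{\nw}(N)\eta(Q)$. So to repair the argument you must either use the Skoruppa--Zagier newspace formula directly (as the paper does) or redo the inclusion--exclusion with the level-dependent Atkin--Lehner parameter and track the resulting main terms for $Q>1$; the admissibility hypothesis is then needed to ensure the resulting main term of the zeroth moment is bounded below, not merely to discard the case $S_k^{\nw,\sigma}(N)=\{0\}$.
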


Note here that similar $\mu_p$-equidistribution results have been shown before in different settings. For example, Conrey-Duke-Farmer showed $\mu_p$-equidistribution over $S_k(1)$ \cite{CDF} and Serre showed $\mu_p$-equidistribution over $S_k(N)$, $S_k^\nw(N)$, and $S_k(N,\chi)$ \cite{Serre}. More recently, the first author showed $\mu_p$-equidistribution over $S_k^\nw(N,\chi)$ for admissible characters $\chi$ \cite{ross-equid}. Theorem \ref{thm:main-theorem} can be interpreted as strengthening Serre's result over $S_k(N)$ and $S_k^\nw(N)$, showing that $\mu_p$-equidistribution still holds over the much finer sign pattern decompositions $S_k^\sigma(N)$ and $S_k^{\nw,\sigma}(N)$. In fact, this sign pattern decomposition is, in some sense, the finest (natural) decomposition possible; the $S_k^{\nw,\sigma}(N)$ seem to be the smallest subspaces of $S_k^{\nw}(N)$ that still respect the Hecke operators and Galois conjugation \cite[Section 5]{chow-ghitza}.

Lastly, in Section \ref{sec:applications}, we discuss several applications of Theorem \ref{thm:main-theorem}, similar to those in \cite[\S 6]{Serre}. The $\mu_p$-equidistribution shown in Theorem \ref{thm:main-theorem} can be used to obtain information about the Galois orbits of Hecke eigenvalues, or equivalently, about the coefficient fields $\QQ(f)$ of newforms $f \in S_k^{\nw,\sigma}(N)$. In particular, we show that the degrees of these coefficient fields are unbounded as $N \to \infty$. This can be interpreted in terms of a weak form of the generalized Maeda conjecture. We are also able to obtain information about the simple $\QQ$-factors of the Jacobian of the modular curve $X_0(N)$. See Section \ref{sec:applications} for precise details.

We now give an overview of the arguments used in this paper. In Section \ref{sec:trace-Tm-WQ-estimate}, we use a trace formula from Skoruppa-Zagier \cite{Skoruppa1988} to estimate $\Tr\,T_m' {\circ} W_Q$ over both $S_k(N)$ and $S_k^\nw(N)$. Then in Section \ref{sec:trace-over-sgnpatt}, we use the results of Section \ref{sec:trace-Tm-WQ-estimate} to obtain a trace formula for $T_m'$ over the sign pattern spaces. 
Next, in Section \ref{sec:sgnpatt-proportion}, we prove Theorems \ref{thm:sgnpatt-proportion} and \ref{thm:W4-sign}, computing the exact proportion of how often each sign pattern can appear. 
Then, in Section \ref{sec:Hecke-equid}, we use the $\Tr \,T_m'$ estimates of Section \ref{sec:trace-over-sgnpatt} to prove the $\mu_p$-equidistribution of the Hecke eigenvalues over $S_k^\sigma(N)$ and $S_k^{\nw,\sigma}(N)$. In this section, we use the same general strategy used by Serre \cite{Serre}. Finally, in Section \ref{sec:applications}, we discuss applications of Theorem \ref{thm:main-theorem}.

\section{Trace Formula for \texorpdfstring{$T'_m {\circ} W_Q$}{T'\_m o W\_Q}} \label{sec:trace-Tm-WQ-estimate}

For integers $k \geq 2$, let $p_k(t, m)$ denote the Lucas sequence of the first kind; i.e. $p_k(t, m)$ is the $x^{k-2}$ coefficient in the power series expansion of $(mx^2 - tx + 1)^{-1}$. Note that in particular, we have by Clayton et al. \cite[Lemmas 2.2, 2.3]{CLAYTON2024186} that
\begin{align}
    \begin{dcases}
        p_k(t,m) = (k-1)\,m^{(k - 2)/2}
        &\text{if } t^2 - 4m = 0,   \\
        p_k(t,m) \le \frac{2m^{(k - 1)/2}}{\sqrt{\lrabs{t^2-4m}}} 
        &\text{if } t^2-4m < 0. 
    \end{dcases}
    \label{eqn:p-k-Delta}
\end{align}

Following Skoruppa-Zagier \cite{Skoruppa1988}, we also define the following class numbers.
For integers $N \geq 1$ and $\Delta \leq 0$, define $H_N(\Delta)$ via 
\begin{align} \label{eqn:H-N-def}
    H_N(\Delta) = \begin{cases}
        a^2b \left( \cfrac{\Delta / a^2b^2}{N/a^2b} \right) H(|\Delta / a^2b^2|) & \text{if } a^2b^2 \mid \Delta, \\
        0 & \text{otherwise},
    \end{cases}
\end{align}
where $a^2b:=(N,\Delta)$ with $b$ squarefree, $\left( \frac{\cdot}{\cdot} \right)$ denotes the Kronecker symbol, and $H(\cdot)$ denotes the Hurwitz-Kronecker class number. Note that in the case of $\Delta = 0$, $\lrp{\frac{0}{1}}=1$ and $H(0) = \frac{-1}{12}$, so that $H_N(0) = \frac{-1}{12}N$

In the following, we use the notation $Q \| N$ to mean $Q | N$ and $(Q, \frac{N}{Q}) = 1$ and $\ell = \square$ to mean the integer $\ell$ is a square.
In addition, $\sigma_0(N) = \sum\limits_{N' | N} 1$, $\sigma_1(N) = \sum\limits_{N' | N} N'$, and $B(N)$ is defined to be the greatest integer $\ell$ such that $\ell^2 | N$.

\begin{proposition}[Skoruppa-Zagier \cite{Skoruppa1988}, correction by Assaf \cite{Assaf}] \label{prop:S-Z-formula}
  For $N \ge 1$ coprime to $m$, $Q \| N$, and $k \geq 2$ even, we have
  \begin{align*}
      \Tr_{S_k(N)}  \, T'_m {\circ} W_{Q}  &= \sum_{\substack{N' \mid N \\ N/N' \mathrm{ squarefree}}} \mu \left( \frac{Q}{(Q, N')} \right) s'_{k,N'} (m, (Q, N')), \\
      \Tr_{S_k^{\nw}(N)}  \, T'_m {\circ} W_{Q} &= \sum_{N' \mid N} \alpha\left( \frac{N}{N'} \right) s'_{k,N'} (m, (Q, N')).
  \end{align*}
  Here $\mu$ denotes the M\"obius function, $\alpha(N)$ is the nonzero multiplicative function defined on prime powers via
  \begin{align*}
      \alpha(p^r) = \begin{cases}
          -1 & r = 1 \text{ or }  2 \\
          1 &  r = 3 \\
          0 & r \geq 4,
      \end{cases}
  \end{align*}
  and
  \begin{align}
      & s'_{k, N}(m,Q) \\ 
      =
      & -\frac{1}{2 m^{(k-1)/2}} \sum_{Q' \mid Q} \sum_{s} p_{k} \left( \frac{s}{\sqrt{Q'}}, m \right) H_{\frac{N}{Q}} (s^2 - 4m Q') \label{eqn:s-k-N-term1}\\
      & - \frac{1}{2 m^{(k-1)/2}} \sum_{m' \mid m} \min \left( m', \frac{m}{m'} \right)^{k - 1} \left(B(Q), m' + \frac{m}{m'}\right) \left( B\left( \frac{N}{Q} \right), m' - \frac{m}{m'}\right) \label{eqn:s-k-N-term2}\\
      & + \frac{1}{m^{(k-1)/2}} \one_{\substack{k = 2 \\ \frac{N}{Q} = \square}} \sigma_0(Q) \sigma_1(m), \label{eqn:s-k-N-term3}
  \end{align}
  with the summation over $s$ ranging over all $s \in \ZZ$ such that $s^2 \leq 4mQ'$, $Q' | s$, and $\left( \big( \frac{s}{Q'} \big)^2, \frac{Q}{Q'} \right)$ is squarefree.
\end{proposition}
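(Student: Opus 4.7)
The plan is to derive this identity via the Eichler-Selberg trace formula applied to the composite operator $T_m \circ W_Q$. The first step is to realize $T_m \circ W_Q$ on $S_k(N)$ as a weighted slash-action sum $f \mapsto \sum f|_k \gamma$ over a system of integer matrices of determinant $mQ$: representing $W_Q$ by a matrix $\pttmatrix{Qa}{b}{Nc}{Qd}$ of determinant $Q$ and composing with the standard Hecke double-coset representatives for $T_m$, one obtains a collection of matrices whose contribution to the trace on $S_k(N)$ can be attacked by the Eichler-Selberg-Hijikata method (evaluating a kernel at its fixed points in $\Gamma_0(N)\backslash \mathcal{H}$).

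The second step is to sort these contributions by the sign of $t^2 - 4 m Q'$, where $t = \Tr(\gamma)$ and $Q' \mid Q$ tracks how the $Q$-part of $\det(\gamma)$ distributes across the matrix. \emph{Elliptic and parabolic} $\gamma$ (i.e.\ $t^2 - 4 m Q' \le 0$) are enumerated by Hurwitz-Kronecker class numbers of the order $\ZZ[\gamma]$, refined to the level-$N/Q$ version $H_{N/Q}$ in \eqref{eqn:H-N-def}; after reparametrizing $t = s/\sqrt{Q'}$ so that $p_k$ and $H_{N/Q}$ acquire integer arguments, this produces \eqref{eqn:s-k-N-term1}. \emph{Hyperbolic} $\gamma$ correspond to factorizations $m = m' \cdot (m/m')$ with $\gamma$ split over $\QQ$; they contribute \eqref{eqn:s-k-N-term2}, where the $(B(Q), \cdot)$ and $(B(N/Q), \cdot)$ gcd factors encode the $W_Q$-twisted $\Gamma_0(N)$-stabilizer data at the two cusps $\infty$ and $0$. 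When $k = 2$, the additional constant term \eqref{eqn:s-k-N-term3} from the Selberg-type contribution is nontrivial only when $N/Q$ is a square.

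The third step is to pass to the newspace via the oldform/newform decomposition $S_k(N) = \bigoplus_{LM \mid N} \iota_M(S_k^\nw(L))$ with $\iota_M : f(\tau) \mapsto f(M \tau)$. Taking the trace of $T_m \circ W_Q$ on each summand (using $(Q, N/Q)=1$ to control how $W_Q$ intertwines with $\iota_M$) and M\"obius-inverting over divisors of $N$, the inversion coefficients collapse on each prime power $p^r \| N/L$ into the multiplicative function $\alpha(p^r) = -1, -1, 1, 0$ for $r = 1, 2, 3, \ge 4$ --- the familiar ``double embedding minus overcounting'' pattern of oldform/newform theory.

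The main technical obstacle is the bookkeeping in the elliptic term. The $W_Q$-factor forces contributing matrices to satisfy specific congruences modulo $N$, and for each $Q' \mid Q$ one must match the admissible traces $s$ (satisfying $Q' \mid s$ together with the squarefree constraint on $\bigl((s/Q')^2, Q/Q'\bigr)$) to the corresponding order data encoded in $H_{N/Q}(s^2 - 4 m Q')$. This is precisely where Assaf's correction enters: the multiplicative packaging of $H_N$ via the Kronecker symbol and the conductor factor $a^2 b$ in \eqref{eqn:H-N-def} must stay consistent with the squarefree condition in the $s$-summation, which is delicate to check locally at each prime dividing $N$. I would close the argument by sanity-checking against $Q = 1$ (recovering the classical Eichler-Selberg formula) and against small prime-power levels where the orbit count can be done by hand.
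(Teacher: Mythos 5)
This proposition is not proved in the paper at all: it is quoted from Skoruppa--Zagier, with Assaf's correction, and the paper's arguments only begin downstream of it (Lemma \ref{lem:s-k-N-estimates} onwards). So the relevant question is whether your sketch would actually establish the stated identities, and as written it does not. What you give is a program --- Eichler--Selberg/Hijikata applied to $T_m \circ W_Q$, contributions sorted into elliptic/parabolic, hyperbolic, and Eisenstein pieces, then a newform sieve --- and that is indeed the standard route to formulas of this type. But every piece of content that is specific to the statement is deferred: you do not derive the $s$-summation constraints ($Q' \mid s$ and $\bigl((s/Q')^2, Q/Q'\bigr)$ squarefree), the exact normalization of $H_{N/Q}$ in \eqref{eqn:H-N-def} with its Kronecker-symbol and $a^2b$ factors, the gcd factors $\bigl(B(Q), m'+\tfrac{m}{m'}\bigr)\bigl(B(\tfrac NQ), m'-\tfrac{m}{m'}\bigr)$ in \eqref{eqn:s-k-N-term2}, or the $k=2$ term \eqref{eqn:s-k-N-term3}. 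You yourself flag the elliptic bookkeeping as ``the main technical obstacle,'' and that is precisely where the published literature contained errors requiring Assaf's correction; sanity checks at $Q=1$ and small levels would not detect the subtler discrepancies there. A sketch that stops short of exactly these computations does not constitute a proof of the proposition.

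The newspace step also has a concrete soft spot. The operator $W_Q$ does not preserve the individual oldform embeddings $\iota_M\bigl(S_k^\nw(L)\bigr)$; it permutes the degeneracy images for $d \mid N/L$, so ``taking the trace of $T_m \circ W_Q$ on each summand'' requires computing the trace of that permutation-plus-twist action, and only the part fixed (up to the level-$L$ Atkin--Lehner sign) contributes. Relatedly, your appeal to the ``familiar double embedding minus overcounting pattern'' is off: the familiar inversion function for newspace traces without $W_Q$ is $(\mu * \mu)(p^r) = -2, 1, 0$ for $r = 1, 2, \ge 3$, whereas the function $\alpha$ in the proposition is $-1, -1, 1, 0$ for $r = 1, 2, 3, \ge 4$ (per prime, generating polynomial $(1-x)(1-x^2)$). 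That difference is exactly the imprint of the $W_Q$-action on the degeneracy maps and of the squarefree condition in the full-space sum, and it has to be computed, not recognized as familiar. To make this a genuine proof you would need to carry out the Hijikata-type evaluation for the twisted operator in full and the $W_Q$-equivariant old/new sieve explicitly --- or, as the paper does, simply cite Skoruppa--Zagier and Assaf.
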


We then estimate the $s'_{k,N}(m,Q)$ terms from this proposition.
In the following, and throughout the entire paper, we use big-$O$ notation with respect to both $N$ and $k$. Note that $m$ here is a fixed constant (so the big-$O$ implied constants will depend on $m$). And $Q$ is considered to be an arbitrary exact divisor of $N$ (so the big-$O$ bounds will only be stated in terms of $N$ and $k$, uniform over arbitrary $Q \parallel N$). 
\begin{lemma} \label{lem:s-k-N-estimates}
    Fix $m \geq 1$.
    Let $N\ge1$ be coprime to $m$, $Q \| N$, and $k \geq 2$ be even. Then 
    \begin{align*}
        s'_{k,N}(m,Q) =   \frac{\one_{m=\square}}{\sqrt{m}} \frac{k - 1}{12}  \frac{N}{Q}  + O(N^{1/2}\sigma_0(N)\log N),
    \end{align*}
    where $s'_{k,N}(m,Q)$ is as defined in Proposition \ref{prop:S-Z-formula}.
\end{lemma}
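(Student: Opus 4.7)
The plan is to split $s'_{k,N}(m,Q)$ into its three pieces \eqref{eqn:s-k-N-term1}, \eqref{eqn:s-k-N-term2}, \eqref{eqn:s-k-N-term3} and verify that the claimed main term comes entirely from the ``parabolic'' locus $\Delta := s^2 - 4mQ' = 0$ in \eqref{eqn:s-k-N-term1}, with all other contributions swept into the error.

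For the main term, writing $s = Q't$ the condition $s^2 = 4mQ'$ becomes $Q't^2 = 4m$, which combined with $(Q',m)=1$ (forced by $(N,m)=1$) yields $Q' \mid 4$. At such a pair, \eqref{eqn:p-k-Delta} gives $p_k(s/\sqrt{Q'},m) = (k-1)m^{(k-2)/2}$, and directly from \eqref{eqn:H-N-def} one has $H_{N/Q}(0) = -(N/Q)/12$, so each contributes exactly $(k-1)(N/Q)/(24\sqrt m)$. A brief case analysis on $v_2(Q)$ shows that when $m$ is not a square no pair is valid (since $(m,Q')=1$ would force both $m$ and $Q'$ to be squares), while when $m$ is a square exactly two pairs survive the squarefree constraint on $((s/Q')^2,Q/Q')$---namely $(Q',s)=(1,\pm 2\sqrt m)$ when $4 \nmid Q$ and $(Q',s)=(4,\pm 4\sqrt m)$ when $4 \mid Q$---producing the advertised main term $\frac{\one_{m=\square}}{\sqrt m}\cdot\frac{k-1}{12}\cdot\frac{N}{Q}$.

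For the error, the elliptic contribution $\Delta<0$ to \eqref{eqn:s-k-N-term1} is dominant. Combining $p_k(s/\sqrt{Q'},m)\le 2m^{(k-1)/2}/|\Delta|^{1/2}$ from \eqref{eqn:p-k-Delta} with the standard estimate $H(n)\ll\sqrt n\,\log n$ and the structure of \eqref{eqn:H-N-def} yields $|H_{N/Q}(\Delta)|\ll\sqrt{N/Q}\,|\Delta|^{1/2}\log N$ (using that the integer $a$ in $a^2b=(N/Q,\Delta)$ satisfies $a\le\sqrt{N/Q}$). Hence each term is $O(\sqrt{N/Q}\log N)$; and since for each $Q' \mid Q$ there are $O_m(\sqrt{m/Q'})$ valid $s$, summing over $Q'$ gives $O_m(\sigma_0(Q))$ terms in total, producing the bound $O(\sqrt N\,\sigma_0(N)\log N)$. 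The piece \eqref{eqn:s-k-N-term2} is bounded by $O_m(\sqrt N)$ using $\min(m',m/m')^{k-1}\le m^{(k-1)/2}$ and $B(Q)B(N/Q)\le\sqrt N$ (the latter from $B(Q)^2 B(N/Q)^2 \mid N$); and \eqref{eqn:s-k-N-term3} is trivially $O(\sigma_0(N))$, vanishing unless $k=2$.

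The only genuinely delicate point is the case analysis for the main term---verifying that regardless of the $2$-adic structure of $Q$ there are always exactly two valid parabolic pairs when $m$ is a square, each producing the same $Q$-independent contribution. Everything else is a straightforward combination of \eqref{eqn:p-k-Delta} with standard class-number estimates.
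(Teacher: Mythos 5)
Your argument is correct and essentially identical to the paper's proof: the main term is extracted from the $\Delta=0$ pairs, which (using $(m,Q')=1$ and $Q'\mid 4$) are exactly $(Q',s)=(1,\pm2\sqrt m)$ or $(4,\pm4\sqrt m)$ when $m=\square$ depending on whether $4\mid Q$, while the elliptic terms of \eqref{eqn:s-k-N-term1} and the pieces \eqref{eqn:s-k-N-term2}, \eqref{eqn:s-k-N-term3} are bounded the same way. One minor slip worth fixing: since the first argument of $p_k$ is $s/\sqrt{Q'}$, the bound \eqref{eqn:p-k-Delta} gives $p_k\!\left(s/\sqrt{Q'},m\right)\le 2m^{(k-1)/2}\sqrt{Q'}/|\Delta|^{1/2}$ (you dropped the $\sqrt{Q'}$), so each elliptic term is $O(\sqrt{N}\log N)$ rather than $O(\sqrt{N/Q}\log N)$ --- harmless, since summing over the $O(\sigma_0(N))$ pairs still yields the stated error $O(N^{1/2}\sigma_0(N)\log N)$.
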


\begin{proof}
    Note that $\sigma_0(Q) \leq \sigma_0(N) = O(N^\varepsilon)$ and $\sigma_1(m) = O(1)$, so the third term \eqref{eqn:s-k-N-term3} of $s'_{k,N}(m,Q)$ is $O(N^\varepsilon)$.
    Also note that since
    \begin{align*}
        \left(B(Q), m' + \frac{m}{m'}\right) \le 2m,\qquad
        \left( B\left( \frac{N}{Q} \right), m' - \frac{m}{m'}\right) \leq \sqrt{\frac{N}{Q}} \le \sqrt{N},
    \end{align*}
    the second term \eqref{eqn:s-k-N-term2} of $s'_{k,N}(m,Q)$ is of magnitude
    \begin{align*}
        &\leq \frac{1}{2m^{(k-1)/2}} \sum_{m' | m} \min \left( m', \frac{m}{m'} \right)^{k - 1}  (2m)\sqrt{N} \leq \frac{(2m)\sqrt{N}}{{2}} \sum_{m' | m} 1 = O(\sqrt N).
    \end{align*}
    
    Lastly, we consider the first term \eqref{eqn:s-k-N-term1} of $s'_{k,N}(m,Q)$. In the double summation, denote $\Delta := s^2 - 4mQ'$, and we break into two cases: when $\Delta < 0$, and when $\Delta = 0$.
    
    When $\Delta < 0$, we have by \eqref{eqn:p-k-Delta} that
    \begin{align*}
        \left|p_k \left( \frac{s}{\sqrt{Q'}}, m \right) \right| & \leq \frac{2m^{(k-1)/2}}{\sqrt{\left| \frac{s^2}{Q'} - 4m \right|}} = \frac{2m^{(k-1)/2}}{\sqrt{\lrabs{\Delta} / Q'}}.
    \end{align*}
    Using \cite[Lemma 2.2]{GOT}, we can bound the Hurwitz-Kronecker class number by
    \begin{align*}
        H(D) \leq \frac{\sqrt{D} (\log D +2)}{\pi}.
    \end{align*}
    So letting $a^2b = \left( \frac{N}{Q}, \Delta \right)$ we have
    \begin{align*}
        \left| H_{\frac{N}{Q}}(\Delta) \right| & \leq \left| a^2b \left( \cfrac{\Delta / a^2b^2}{N/a^2b} \right) H(|\Delta / a^2b^2|) \right| \\
        & \leq \frac{a^2 b \sqrt{|\Delta / a^2b^2|}  \left( \log\left| \frac{\Delta}{a^2b^2} \right| + 2\right)}{\pi} \\
        & \leq \frac{a \sqrt{|\Delta|} (\log |\Delta| + 2)}{\pi}.
    \end{align*}
    Combining these,
    \begin{align*}
        \left| p_{k} \left( \frac{s}{\sqrt{Q'}}, m \right) H_{\frac{N}{Q}} (\Delta) \right| 
        & \leq \frac{2m^{(k-1)/2}}{\sqrt{\Delta / Q'}} \frac{a \sqrt{|\Delta|} (\log|\Delta| + 2)}{\pi} \\
        & = \frac{2m^{(k-1)/2} a \sqrt{Q'} (\log|\Delta| + 2)}{\pi} \\
        & \leq \frac{2m^{(k-1)/2} \sqrt{N} (\log(4mN) + 2)}{\pi} \\
        & = m^{(k-1)/2} O(N^{1/2} \log N),
    \end{align*}
    where the last inequality comes from the facts that $a^2 \leq \frac{N}{Q} \leq \frac{N}{Q'}$ and $|\Delta| \le 4mQ' \le 4mN$.
    
    Now, note that in \eqref{eqn:s-k-N-term1}, the number of terms in the summation over $Q'|Q$ is equal to $\sigma_0(Q) \le \sigma_0(N)$.
    Furthermore, in \eqref{eqn:s-k-N-term1}, the number of terms in the summation over $s$ is $\leq 4\sqrt{m/Q'} = O(1)$ since $Q' | s$ and $|s| \leq 2\sqrt{mQ'}$.
    Thus, the contribution of the $\Delta < 0$ terms in $s'_{k,N}(m,Q)$ is $O(N^{1/2} \sigma_0(N) \log N)$.
    
    Finally, we consider the $\Delta = 0$ terms, which will make up the main term of $s'_{k,N}(m,Q)$.
    Recall that $(m,Q')=1$. Then note that $\Delta := s^2 - 4mQ' =0$ implies that $m=\square$ and $Q'=\square$. Additionally, $(Q')^2 \,|\, s^2=4mQ'$ implies that $Q' \,|\, 4$. Hence we must have $Q' = 1 \text{ or }4$. It is straightforward to verify that the first case occurs precisely when $m=\square$ and $4 \nmid Q$ (for $Q'=1$, $s=\pm 2 \sqrt m$); and the second case occurs precisely when $m=\square$ and $4 | Q$ (for $Q'=4$, $s=\pm 4 \sqrt m$).
    This means that in all cases, the number of $\Delta = 0$ terms appearing in $s'_{k,N}(m,Q)$ is exactly $2\,\one_{m=\square}$.
     
    And when $\Delta = 0$, we have by \eqref{eqn:p-k-Delta} and \eqref{eqn:H-N-def} that
    \begin{align}
        p_k\lrp{\frac{s}{\sqrt{Q'}}, m} &= (k-1)\,m^{(k-2)/2} \\
        \text{and}\quad H_{\frac{N}{Q}} (s^2 - 4m Q') & = H_{\frac{N}{Q}} (0) = \frac{-1}{12} \frac{N}{Q}.
    \end{align}
    
    Thus, extracting the $\Delta=0$ terms from $s'_{k, N}(m,Q)$, we obtain
    \begin{align*}
        &-\frac{1}{2m^{(k-1)/2}} \sum_{Q' | Q} \sum_{\substack{s^2 = 4mQ' \\ Q' | s}} p_{k} \left( \frac{s}{\sqrt{Q'}}, m \right) H_{\frac{N}{Q}} (s^2 - 4m Q') \\
        =& -\frac{1}{2m^{(k-1)/2}} 
        \Big(2 \one_{m=\square}\Big) 
        \Big( (k-1) m^{(k-2)/2} \Big)
        \Big( \frac{-1}{12} \frac{N}{Q} \Big)  \\
        = &  \frac{\one_{m=\square}}{\sqrt{m}} \frac{k - 1}{12} \frac{N}{Q},
    \end{align*}
    completing the proof.
\end{proof}

With this estimate of $s'_{k,N}(m,Q)$, we can then estimate $\Tr \, T'_m {\circ} W_Q$.
In the following, we use the notation $\lrbb{A}_{\textit{condition}}$ to denote that the term $A$ only appears when $\textit{condition}$ is satisfied.

\begin{proposition}\label{prop:tr-estimate}
    Fix $m \geq 1$.
    Let $N\ge1$ be coprime to $m$, $Q \| N$, and $k \geq 2$ be even.
    Then
    \begin{align*}
        \Tr_{S_k(N)} \, T'_m {\circ} W_Q &= \frac{\one_{m = \square}}{\sqrt{m}}  \frac{k - 1}{12} \psi(N) \one_{Q = 1} + O(N^{1/2} \sigma_0(N)^2 \log N), \\
        \Tr_{S_k^{\nw}(N)} \, T'_m {\circ} W_Q &=   \frac{\one_{m = \square}}{\sqrt{m}}  \frac{k - 1}{12} \psi^{\nw}(N) \eta(Q) 
         + O(N^{1/2} \sigma_0(N)^2 \log N),
    \end{align*}
    where $\psi$, $\psi^\nw$, and $\eta$ are the nonzero multiplicative functions defined on prime powers via
    \begin{align*}
        \psi(p^r) &:= p^r \lrp{1+\frac 1p}, \\
        \psi^{\nw}(p^r) &:= p^r \lrp{  
            1 - \frac 1p - \lrbb{ \frac{1}{p^2} }_{r\ge 2} + 
            \lrbb{ \frac{1}{p^3} 
            }_{r\ge 3} 
        }, \\
         \eta(p^r) &:= \frac{-\one_{r=2}}{p^2 - p - 1}.
    \end{align*}
    Note in particular that when $N$ is squarefree, $\eta(Q) = \one_{Q = 1}$.
\end{proposition}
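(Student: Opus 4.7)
The plan is to substitute the estimate of Lemma \ref{lem:s-k-N-estimates} into each $s'_{k,N'}(m,(Q,N'))$ term appearing in the Skoruppa-Zagier formulas of Proposition \ref{prop:S-Z-formula}, splitting the result into a main term and a cumulative error term.

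First I would bound the error. Applying Lemma \ref{lem:s-k-N-estimates} with parameters $(N',(Q,N'))$ in place of $(N,Q)$ gives an error of size $O((N')^{1/2}\sigma_0(N')\log N')$ per term, which is $O(N^{1/2}\sigma_0(N)\log N)$ uniformly in $N' \mid N$. Each trace formula in Proposition \ref{prop:S-Z-formula} is a sum of at most $\sigma_0(N)$ such terms, weighted by the bounded multiplicative factors $\mu(Q/(Q,N'))$ and $\alpha(N/N')$, so the cumulative error is $O(N^{1/2}\sigma_0(N)^2 \log N)$, matching the claimed bound.

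Next I would compute the main term. After substitution the leading part equals $\frac{\one_{m=\square}}{\sqrt{m}} \cdot \frac{k-1}{12}$ times
\[
    \Sigma_{\text{full}}(N,Q) := \sum_{\substack{N' \mid N \\ N/N' \text{ squarefree}}} \mu\lrp{\frac{Q}{(Q,N')}} \frac{N'}{(Q,N')}
\]
in the full-space case and
\[
    \Sigma_{\nw}(N,Q) := \sum_{N' \mid N} \alpha\lrp{\frac{N}{N'}} \frac{N'}{(Q,N')}
\]
in the newspace case. The key observation is that both of these arithmetic sums are jointly multiplicative in the pair $(N,Q)$: since $Q \| N$, one may factor $N = N_1 N_2$ coprimely and correspondingly write $Q = Q_1 Q_2$ with $Q_i \| N_i$, and then the multiplicativity of $\mu$, $\alpha$, and the gcd lets each sum split as a product over the prime-power factors of $N$. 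So it suffices to evaluate them at $N = p^r$ with $Q \in \{1, p^r\}$.

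Finally I would verify the prime-power identities. For $\Sigma_{\text{full}}(p^r, Q)$ the squarefree condition restricts $N'$ to $\{p^{r-1}, p^r\}$, and a short calculation gives $\Sigma_{\text{full}}(p^r, 1) = p^{r-1} + p^r = \psi(p^r)$ and $\Sigma_{\text{full}}(p^r, p^r) = 0$, which upon taking the product over primes becomes $\psi(N)\,\one_{Q = 1}$. For $\Sigma_{\nw}(p^r, Q)$ one expands $\sum_{j=0}^{r} \alpha(p^{r-j}) \, p^{j - \min(j, v_p(Q))}$ and checks case-by-case in $r$ that it equals $\psi^\nw(p^r)$ when $Q = 1$ and $\psi^\nw(p^r)\,\eta(p^r)$ when $Q = p^r$. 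The main obstacle is this last identity in the newspace case: although each subcase is a finite computation, one must carefully track the telescoping of $\alpha$ --- in particular the cancellation at $r \geq 3$ when $Q = p^r$ that produces $\eta(p^r) = 0$ for $r \ne 2$, together with the fact that $\alpha$ is supported on exponents $\leq 3$. Once these prime-power identities are established, multiplicativity assembles the global formulas $\psi(N)\,\one_{Q = 1}$ and $\psi^\nw(N)\,\eta(Q)$.
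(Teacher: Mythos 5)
Your proposal is correct and follows essentially the same route as the paper: substitute the estimate of Lemma \ref{lem:s-k-N-estimates} into Proposition \ref{prop:S-Z-formula}, absorb the at most $\sigma_0(N)$ error terms with bounded weights into $O(N^{1/2}\sigma_0(N)^2\log N)$, and evaluate the two arithmetic sums by multiplicativity at prime powers. The only cosmetic difference is that the paper packages the prime-power reduction as a Dirichlet convolution with $\theta_Q(d)=d/(Q,d)$, whereas you split the sums directly over a coprime factorization of $N$; the resulting local computations (yielding $\psi(p^r)\one_{Q=1}$ and $\psi^{\nw}(p^r)$, $-\one_{r=2}$) are identical.
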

\begin{proof}
    Define the multiplicative function $\theta_M(d) := \frac{d}{(M, d)}$. Observe that $\mu {\circ} \theta_M$ is also multiplicative since $\theta_M(d_1), \theta_M(d_2)$ are coprime whenever $d_1,d_2$ are coprime.
 
    To show the first desired identity, it suffices (by Lemma \ref{lem:s-k-N-estimates} and Proposition \ref{prop:S-Z-formula}) to show that
    \begin{align} \label{eqn:sum-mu}
      \sum_{\substack{N' \mid N \\ N/N' \mathrm{ squarefree}}} \mu \lrp{ \frac{Q}{(Q, N')} } \frac{N'}{(Q, N')}  = \psi(N)\one_{Q = 1}.
  \end{align}

  Observe that $\one_{N/N' \ \text{squarefree}} = \mu^2(N/N')$, $\mu \lrp{ \frac{Q}{(Q, N')} } = \mu \lrp{ \frac{N/N'}{(N/N', N/Q)} } = \mu {\circ} \theta_{N/Q}(N/N')$, and $\frac{N'}{(Q, N')} = \theta_Q(N')$. This means that 
  \begin{align}
      \text{LHS of \eqref{eqn:sum-mu}} &= [[\mu^2 \cdot\mu {\circ}\theta_{N/Q}] * \theta_Q](N) \\
      & = \prod_{p^r \| N} [[\mu^2 \cdot \mu {\circ}\theta_{N/Q}] * \theta_Q](p^r) \\
      & = \prod_{p^r \| N} \sum_{i = 0}^r   \mu^2(p^i)\, \mu {\circ} \theta_{N/Q}(p^i) \, \theta_Q(p^{r - i}) \\
      & = \prod_{p^r \| N} \Big( \theta_Q(p^r) + \mu {\circ} \theta_{N/Q}(p) \,\theta_Q(p^{r - 1}) \Big) \\
      &= \prod_{p^r \| N}  
      \begin{cases}
        p^r + p^{r-1} & \text{if } p \nmid Q \\
         0 & \text{if } p \mid Q
      \end{cases} \\
      &= \psi(N)\one_{Q = 1}.
  \end{align}
  Note that here, the second-to-last equality comes from the facts that
    \begin{align}
        \theta_Q(p^s) &= \begin{cases}
            p^s & \text{if } p \nmid Q \\
            1 & \text{if } p | Q
        \end{cases} \qquad \text{and}\qquad
        \mu{\circ}\theta_{N/Q}(p) = \begin{cases}
            1 & \text{if } p \nmid Q \\
            -1 & \text{if } p | Q.
        \end{cases} 
    \end{align}

  Similarly, to show the second desired identity, it suffices (by Lemma \ref{lem:s-k-N-estimates} and Proposition \ref{prop:S-Z-formula}) to show that
  \begin{align} \label{eqn:sum-alpha}
      \sum_{N' | N} \alpha\left( \frac{N}{N'} \right) \frac{N'}{(Q, N')}  = \psi^\nw(N) \eta(Q).
  \end{align}
  And here,
  \begin{align}
      \text{LHS of \eqref{eqn:sum-alpha}} &= [\alpha * \theta_Q](N) \\
      & = \prod_{p^r \| N} [\alpha * \theta_Q](p^r) \\
      & = \prod_{p^r \| N} \sum_{i = 0}^r \alpha(p^{i}) \theta_Q(p^{r-i}) \label{eqn:sum-alpha-conv} \\
      &= \prod_{p^r \parallel N} \Big( \theta_Q(p^r) - \theta_Q(p^{r-1}) - \lrbb{\theta_Q(p^{r-2})}_{r \ge 2}
      + \lrbb{\theta_Q(p^{r-3})}_{r \ge 3} \Big) \\
      &= \prod_{p^r \parallel N} \begin{cases}
         \psi^\nw(p^r) & \text{if } p \nmid Q \\
         - \one_{r=2} & \text{if } p \mid Q
      \end{cases} \\
      &= \psi^\nw(N) \prod_{p \mid Q} \frac{- \one_{v_p(N)=2} }{ \psi^\nw(p^2) } \\
      &= \psi^\nw(N) \prod_{p^r \| Q} \frac{- \one_{r=2} }{ p^2-p-1 } \\
      &= \psi^\nw(N) \eta(Q),
  \end{align}
  as desired.
\end{proof}

\section{Trace Formula for \texorpdfstring{$T'_m$}{T'\_m} over the sign pattern spaces} \label{sec:trace-over-sgnpatt}

In \cite[Proposition 3.2]{kimball-2018-refined-dim-formula}, Martin derived a formula for $\dim S_k^{\nw,\sigma}(N)$ in terms of the traces $\Tr_{S_k^{\nw}(N)}W_Q$. Here, we prove a similar formula for $\Tr_{S_k^{\sigma}(N)}T'_m$ and $\Tr_{S_k^{\nw,\sigma}(N)}T'_m$  in terms of the traces $\Tr_{S_k(N)} T'_m {\circ} W_Q$ and $\Tr_{S_k^\nw(N)} T'_m {\circ} W_Q$, respectively.

We note that the identity \eqref{eqn:sgnpatts-orthogonal} underlying this lemma can also be interpreted as a special case of orthogonality for characters over a finite group. (The set of all exact divisors of $N$ forms a group via the binary operation $Q_1 * Q_2 := \frac{Q_1 Q_2}{(Q_1,Q_2)^2}$. And sign patterns for $N$ make up the characters on this group.)

\begin{lemma}\label{lem:martin-identity} For $N\geq1$,
\begin{align}
    \Tr_{S_k^{\sigma}(N)}T'_m &= \frac{1}{2^{\omega(N)}}\sum_{Q\mid\mid N}\sigma(Q)\Tr_{S_k(N)} \, T'_m {\circ} W_Q,  \\ 
    \Tr_{S_k^{\nw,\sigma}(N)}T'_m &= \frac{1}{2^{\omega(N)}}\sum_{Q\mid\mid N}\sigma(Q)\Tr_{S_k^\nw(N)} \, T'_m {\circ} W_Q.  
\end{align}
\end{lemma}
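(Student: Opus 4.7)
The plan is to exhibit an explicit formula for the orthogonal projector $P_\sigma \colon S_k(N) \to S_k^\sigma(N)$ in terms of the Atkin-Lehner involutions, substitute it into the trace, and exploit that $T'_m$ commutes with every $W_Q$.

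First I would write
\begin{align}
    P_\sigma := \prod_{p^r \| N} \frac{1 + \sigma(p^r)\,W_{p^r}}{2}.
\end{align}
Since each $W_{p^r}$ is an involution, $\tfrac{1+\sigma(p^r)W_{p^r}}{2}$ is the orthogonal projection onto the $\sigma(p^r)$-eigenspace of $W_{p^r}$. The involutions $W_{p^r}$ for distinct primes $p \mid N$ commute, so these individual projectors commute as well, and their product $P_\sigma$ is therefore the orthogonal projection onto the simultaneous eigenspace $S_k^\sigma(N)$. Expanding the product by choosing, for each prime power $p^r \| N$, either the identity term or the term $\sigma(p^r) W_{p^r}$, and using the multiplicativity $W_{Q_1}W_{Q_2} = W_{Q_1 Q_2}$ (for coprime $Q_1,Q_2 \| N$) together with the multiplicativity of $\sigma$, I obtain
\begin{align}
    P_\sigma = \frac{1}{2^{\omega(N)}} \sum_{Q \| N} \sigma(Q)\, W_Q.
\end{align}

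Next, since $(m,N)=1$, the Hecke operator $T'_m$ commutes with every $W_Q$, hence with $P_\sigma$, so $T'_m$ preserves both $S_k^\sigma(N)$ and its orthogonal complement. Consequently, $T'_m \circ P_\sigma$ vanishes on the complement and restricts to $T'_m|_{S_k^\sigma(N)}$ on $S_k^\sigma(N)$, giving
\begin{align}
    \Tr_{S_k^\sigma(N)} T'_m = \Tr_{S_k(N)} T'_m \circ P_\sigma = \frac{1}{2^{\omega(N)}} \sum_{Q \| N} \sigma(Q)\, \Tr_{S_k(N)} T'_m \circ W_Q,
\end{align}
which is the first identity.

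For the newspace identity, I would observe that the Atkin-Lehner involutions $W_Q$ preserve $S_k^\nw(N)$, so the projector $P_\sigma$ restricts to the orthogonal projector from $S_k^\nw(N)$ onto $S_k^{\nw,\sigma}(N)$, and the same computation applies verbatim with $S_k(N)$ replaced by $S_k^\nw(N)$. There is no real obstacle here; the argument is essentially bookkeeping, and the only point that requires care is verifying that the product of commuting $\pm 1$-eigenspace projectors is the simultaneous eigenspace projector, which follows because the $W_{p^r}$ are simultaneously diagonalizable.
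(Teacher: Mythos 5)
Your proof is correct, and it reaches the identity by a slightly different mechanism than the paper. You build the explicit idempotent $P_\sigma = \prod_{p^r \| N} \tfrac{1+\sigma(p^r)W_{p^r}}{2} = \tfrac{1}{2^{\omega(N)}}\sum_{Q \| N}\sigma(Q)W_Q$ and evaluate $\Tr_{S_k(N)}\,T'_m{\circ}P_\sigma$, using that $T'_m$ commutes with the $W_Q$ (valid since $(m,N)=1$, which is the standing hypothesis of the surrounding results) so that the trace of $T'_m{\circ}P_\sigma$ is the trace of $T'_m$ restricted to $S_k^\sigma(N)$. The paper instead goes in the opposite direction: it splits each $\Tr_{S_k(N)}\,T'_m{\circ}W_Q$ over the decomposition $S_k(N)=\bigoplus_{\sigma'}S_k^{\sigma'}(N)$, replaces $W_Q$ by the scalar $\sigma'(Q)$ on each summand, swaps the sums, and sifts out $\sigma'=\sigma$ via the orthogonality relation $\sum_{Q\|N}\sigma(Q)\sigma'(Q)=2^{\omega(N)}\one_{\sigma=\sigma'}$. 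Both arguments are the same character-orthogonality computation for the group of exact divisors of $N$ (a fact the paper itself notes just before the lemma), packaged dually: your version isolates the projector once and for all and makes the role of the commutation $T'_m W_Q = W_Q T'_m$ explicit, while the paper's version avoids introducing the projector and works purely with traces over the eigenspace decomposition. Your observation that $P_\sigma$ restricts to the analogous projector on $S_k^\nw(N)$ (since the $W_Q$ preserve the new subspace) correctly handles the second identity, matching the paper's remark that the newspace argument is identical.
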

\begin{proof}
We show this result just for the full space $S_k(N)$, and the argument for the newspace $S_k^\nw(N)$ is identical.

First, observe that 
\begin{align}
    \sum_{Q\mid\mid N}\sigma(Q)\sigma '(Q) 
    &= \prod_{p^r \| N} \sum_{Q \| p^r} \sigma(Q) \sigma'(Q) =  \prod_{p^r \| N} \lrp{ 1 +  \sigma(p^r) \sigma'(p^r) }\\
    &= \prod_{p^r \| N} 2 \one_{\sigma(p^r) = \sigma'(p^r)} = 2^{\omega(N)}\one_{\sigma =\sigma '}. \label{eqn:sgnpatts-orthogonal}
\end{align}

Then, proceeding by splitting $\Tr_{S_k(N)}T'_m{\circ}W_Q$ into a sum of $\Tr_{S_k^{\sigma '}(N)}T'_m{\circ}W_Q$ for all sign patterns $\sigma'$ of $N$:
    \begin{align*}
        \frac{1}{2^{\omega(N)}}\sum_{Q\mid\mid N}\sigma(Q)\Tr_{S_k(N)} \, T'_m{\circ}W_Q 
        &= \frac{1}{2^{\omega(N)}}\sum_{Q \mid\mid N}\sigma(Q)\sum_{\sigma '} \Tr_{S_k^{\sigma '}(N)}T'_m {\circ}W_Q \\
        &= \frac{1}{2^{\omega(N)}}\sum_{Q \mid\mid N}\sum_{\sigma '}\sigma(Q)\sigma '(Q)\Tr_{S_k^{\sigma '}(N)}T'_m 
        \\&= \frac{1}{2^{\omega(N)}}\sum_{\sigma'} 2^{\omega(N)} \one_{\sigma = \sigma'}\Tr_{S_k^{\sigma'}(N)}T_m'
        \\&= \Tr_{S_k^{\sigma}(N)}T'_m, 
    \end{align*}
    as desired.
\end{proof}

We can then estimate $\Tr_{S_k^\sigma(N)} T'_m$ and $\Tr_{S_k^{\nw,\sigma}(N)} T'_m$ by applying Proposition \ref{prop:tr-estimate} to Lemma \ref{lem:martin-identity}.
\begin{corollary} \label{cor:tr-tm-sgnpatt-estimate}
    Fix $m \geq 1$.
    Then for $N \ge 1$ coprime to $m$ and $k \geq 2$ even,
\begin{align}
        \label{eqn:trace-tm-fs-estimate}
        \Tr_{S_k^\sigma(N)} \, T'_m &= \frac{\one_{m = \square}}{\sqrt{m}}  \frac{k - 1}{12} \frac{\psi(N)}{2^{\omega(N)}}  + O(N^{1/2}\sigma_0(N)^2\log N), \\
        \label{eqn:trace-tm-ns-estimate}
        \Tr_{S_k^{\nw,\sigma}(N)} \, T'_m &=   \frac{\one_{m = \square}}{\sqrt{m}}  \frac{k - 1}{12} \frac{\psi^{\nw}(N)}{2^{\omega(N)}} \prod_{p^r||N} \lrp{ 1 + \sigma(p^r) \frac{- \one_{r=2}}{p^2 - p -1}} 
         + O(N^{1/2} \sigma_0(N)^2 \log N).
         \qquad 
    \end{align}
\end{corollary}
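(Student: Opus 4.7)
The plan is to combine Lemma \ref{lem:martin-identity} with the trace formula from Proposition \ref{prop:tr-estimate} via direct substitution. For the full space, starting from
\[
    \Tr_{S_k^\sigma(N)} T'_m = \frac{1}{2^{\omega(N)}} \sum_{Q \| N} \sigma(Q) \Tr_{S_k(N)} \, T'_m {\circ} W_Q,
\]
I substitute the estimate from Proposition \ref{prop:tr-estimate}. The main term on the right is supported only on $Q = 1$ (thanks to the factor $\one_{Q=1}$), where $\sigma(1) = 1$, immediately yielding the claimed main term $\frac{\one_{m=\square}}{\sqrt{m}} \frac{k-1}{12} \frac{\psi(N)}{2^{\omega(N)}}$. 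The error accumulated from the $2^{\omega(N)}$ terms is bounded by $2^{\omega(N)} \cdot O(N^{1/2}\sigma_0(N)^2 \log N)$, which after dividing by $2^{\omega(N)}$ remains $O(N^{1/2}\sigma_0(N)^2 \log N)$, giving \eqref{eqn:trace-tm-fs-estimate}.

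For the newspace, the same substitution produces
\[
    \Tr_{S_k^{\nw,\sigma}(N)} T'_m = \frac{\one_{m=\square}}{\sqrt{m}} \frac{k-1}{12} \frac{\psi^\nw(N)}{2^{\omega(N)}} \sum_{Q \| N} \sigma(Q)\eta(Q) + O(N^{1/2}\sigma_0(N)^2 \log N),
\]
where I have pulled the $N$-dependent factor $\psi^\nw(N)$ out of the sum (it does not depend on $Q$). The key observation is that both $\sigma$ and $\eta$ are multiplicative on exact divisors, so the sum factors as an Euler-style product over $p^r \| N$:
\[
    \sum_{Q \| N} \sigma(Q)\eta(Q) = \prod_{p^r \| N} \sum_{Q \| p^r} \sigma(Q)\eta(Q) = \prod_{p^r \| N} \bigl(1 + \sigma(p^r)\eta(p^r)\bigr),
\]
since for each $p^r \| N$, the only exact divisors of $p^r$ are $1$ and $p^r$ itself. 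Plugging in the definition $\eta(p^r) = -\one_{r=2}/(p^2-p-1)$ recovers exactly the product in \eqref{eqn:trace-tm-ns-estimate}.

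There is no substantive obstacle here; the corollary is essentially a bookkeeping consolidation of the two prior results. The only points requiring a modicum of care are (i) absorbing the uniform-in-$Q$ error from Proposition \ref{prop:tr-estimate} after summing over $Q \| N$, and (ii) recognizing the multiplicative factorization of $\sum_Q \sigma(Q)\eta(Q)$, which is the analogue of the orthogonality identity \eqref{eqn:sgnpatts-orthogonal} but applied to the main-term weights $\eta(Q)$ instead.
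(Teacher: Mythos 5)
Your proposal is correct and follows essentially the same route as the paper: apply Lemma \ref{lem:martin-identity}, substitute Proposition \ref{prop:tr-estimate}, note the main term survives only at $Q=1$ for the full space, absorb the $2^{\omega(N)}$ uniform error terms after dividing by $2^{\omega(N)}$, and factor $\sum_{Q\|N}\sigma(Q)\eta(Q)=\prod_{p^r\|N}\bigl(1+\sigma(p^r)\eta(p^r)\bigr)$ by multiplicativity for the newspace. The only difference is that you make the multiplicative factorization of the $\eta$-weighted sum explicit, a step the paper performs implicitly in its final display.
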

\begin{proof}
Using Lemma \ref{lem:martin-identity} and then Proposition \ref{prop:tr-estimate}, we obtain
\begin{align*}
    \Tr_{S_k^\sigma(N)}T_{m}'
    &=\frac{1}{2^{\omega(N)}}\sum_{Q\mid\mid N}\sigma(Q)\Tr_{S_k(N)}T_m'{\circ} W_Q
    \\
    &=\frac{1}{2^{\omega(N)}} \sum_{Q \| N} \sigma(Q) \frac{\one_{m=\square}}{\sqrt{m}}\frac{k-1}{12}\psi(N)  \one_{Q=1} +O(N^{1/2}\sigma_0(N)^2\log N)\\
    &=\frac{\one_{m=\square}}{\sqrt{m}}\frac{k-1}{12}\frac{\psi(N)}{2^{\omega(N)}}+O(N^{1/2}\sigma_0(N)^2\log N),\\
    \Tr_{S_k^{\nw,\sigma}(N)}T_{m}'&=\sum_{Q\mid\mid N}\sigma(Q)\Tr_{S_k^{\nw}(N)}T_m'{\circ} W_Q
    \\&=\frac{1}{2^{\omega(N)}} \sum_{Q\|N}  
    \sigma(Q)
    \frac{\one_{m=\square}}{\sqrt{m}}\frac{k-1}{12}\psi^{\nw}(N)
    \eta(Q)+O(N^{1/2}\sigma_0(N)^2\log N)
    \\
    &=\frac{\one_{m=\square}}{\sqrt{m}}\frac{k-1}{12}\frac{\psi^{\nw}(N)}{2^{\omega(N)}}\prod_{p^r||N} \lrp{ 1 + \sigma(p^r) \frac{- \one_{r=2}}{p^2 - p -1}}+O(N^{1/2}\sigma_0(N)^2\log N),
\end{align*}
as desired.
\end{proof}

We point out here that the main term of \eqref{eqn:trace-tm-fs-estimate} always grows faster than the error term 
\\ 
$O(N^{1/2} \sigma_0(N)^2\log N) = O(N^{1/2+\varepsilon})$. Furthermore, the main term of \eqref{eqn:trace-tm-ns-estimate} grows faster than the error term, assuming it is not the case that $4 \| N$ with $\sigma(4)=+1$ (because after excluding this one particular case, the $\prod_{p^r||N} \lrp{ 1 + \sigma(p^r) \frac{- \one_{r=2}}{p^2 - p -1}}$ factor in \eqref{eqn:trace-tm-ns-estimate} is $\ge 1/2$ by \eqref{eqn:factor-of-5}). In other words, the main terms of this corollary grow faster than the corresponding error terms for all admissible sign patterns $\sigma$.

\section{Proportion of Sign Patterns}
\label{sec:sgnpatt-proportion}

In this section, we show Theorems \ref{thm:sgnpatt-proportion} and \ref{thm:W4-sign}.

{ 
\renewcommand{\thetheorem}{\ref{thm:sgnpatt-proportion}}
\begin{theorem}
    Consider $N \ge 1$ and $k \ge 2$ even. Then as $N+k \to \infty$, the proportion of cusp forms with global sign $+1$ is given by
    \begin{align}
        \frac{\dim S_k^+(N)}{\dim S_k(N)} &\sim \frac12 + \frac{\one_{N=1}}{2},  \\
        \frac{\dim S_k^{\nw,+}(N)}{\dim S_k^\nw(N)} &\sim
            \frac12+\frac{ \one_{N=\textnormal{cubefree square}}}{2} \prod_{p|N} \frac{-1}{p^2-p-1}.
    \end{align}
    Moreover, consider sign patterns $\sigma$ for $N$. Then as $N+k \to \infty$, the proportion of cusp forms with sign pattern $\sigma$ is given by
    \begin{align}
        \label{eqn:proportion-fullspace}
        \frac{\dim S_k^\sigma(N)}{\dim S_k(N)} &\sim \frac{1}{2^{\omega(N)}},  \\
        \label{eqn:proportion-newspace}
        \frac{\dim S_k^{\nw,\sigma}(N)}{\dim S_k^\nw(N)} &\sim 
        \frac{1}{2^{\omega(N)}} \prod_{p^r||N} \lrp{ 1 + \sigma(p^r) \frac{- \one_{r=2}}{p^2 - p -1}}.
    \end{align}
\end{theorem}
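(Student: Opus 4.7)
The plan is to specialize Corollary \ref{cor:tr-tm-sgnpatt-estimate} to $m=1$. Since $T'_1 = \mathrm{Id}$, the trace equals the dimension, and the corollary yields
\begin{align}
    \dim S_k^\sigma(N) &= \tfrac{k-1}{12}\tfrac{\psi(N)}{2^{\omega(N)}} + O(N^{1/2+\varepsilon}), \\
    \dim S_k^{\nw,\sigma}(N) &= \tfrac{k-1}{12}\tfrac{\psi^\nw(N)}{2^{\omega(N)}} \prod_{p^r\|N}\lrp{1 + \sigma(p^r)\tfrac{-\one_{r=2}}{p^2-p-1}} + O(N^{1/2+\varepsilon}).
\end{align}
To obtain $\dim S_k(N)$ and $\dim S_k^\nw(N)$, I sum over $\sigma$: in the full-space case the summand is independent of $\sigma$, while in the newspace case the identity $\sum_{\sigma(p^r)=\pm1}(1+\sigma(p^r)\eta(p^r)) = 2$ factored over primes gives $\sum_\sigma \prod_{p^r\|N}(1+\sigma(p^r)\eta(p^r)) = 2^{\omega(N)}$. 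Thus $\dim S_k(N)\sim \tfrac{k-1}{12}\psi(N)$ and $\dim S_k^\nw(N)\sim \tfrac{k-1}{12}\psi^\nw(N)$. Taking ratios immediately gives \eqref{eqn:proportion-fullspace} and \eqref{eqn:proportion-newspace}.

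For the two global sign asymptotics, I use the standard identity $\dim S_k^\pm(N) = \tfrac12(\dim S_k(N) \pm \Tr W_N)$, which reduces everything to a single application of Proposition \ref{prop:tr-estimate} at $m=1$ and $Q=N$. In the full space this gives $\Tr_{S_k(N)}W_N = \tfrac{k-1}{12}\psi(N)\one_{N=1} + O(N^{1/2+\varepsilon})$, so dividing by $\dim S_k(N)$ produces $\tfrac12 + \tfrac{\one_{N=1}}{2}$. In the newspace I get $\Tr_{S_k^\nw(N)} W_N = \tfrac{k-1}{12}\psi^\nw(N)\eta(N) + O(N^{1/2+\varepsilon})$, where $\eta(N) = \prod_{p^r\|N}\frac{-\one_{r=2}}{p^2-p-1}$ vanishes unless every prime power $p^r\|N$ has $r=2$, i.e.\ unless $N$ is a cubefree square; in that case $\eta(N) = \prod_{p|N}\frac{-1}{p^2-p-1}$, giving the second formula.

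The main technical point is to verify uniformly as $N+k\to\infty$ that the main terms dominate the $O(N^{1/2+\varepsilon})$ error. For \eqref{eqn:proportion-fullspace}, the ratio of main term to error is $\gtrsim (k-1)\psi(N)/(2^{\omega(N)} N^{1/2+\varepsilon}) \gtrsim (k-1) N^{1/2 - 2\varepsilon}\to\infty$, since $\psi(N)\ge N$ and $2^{\omega(N)} = O(N^\varepsilon)$. The same estimate handles the newspace formula \eqref{eqn:proportion-newspace} for all \emph{admissible} $\sigma$, because the lower bound $0.71 \le \prod_{p^r\|N}(1+\sigma(p^r)\eta(p^r))$ from \eqref{eqn:factor-of-5} keeps the main term comparable to $\psi^\nw(N)/2^{\omega(N)}$. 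The identical argument applies to the two global sign formulas, using that $\psi(N) \gg 1$ and $\psi^\nw(N) \gg N^{1-\varepsilon}$.

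The main obstacle is the single \emph{inadmissible} case $4\|N$ with $\sigma(4)=+1$ in the newspace: here both the main term and the error bound are present with no a priori sign constraint, so the formal asymptotic expansion only guarantees $\dim S_k^{\nw,\sigma}(N) = O(N^{1/2+\varepsilon})$, which is not enough to conclude $\dim S_k^{\nw,\sigma}(N) = 0$. To close this gap I would invoke Theorem \ref{thm:W4-sign}, which forces $W_4 = -\mathrm{Id}$ on $S_k^\nw(N)$ whenever $4\|N$, so that $S_k^{\nw,\sigma}(N) = 0$ identically in this case. The right-hand side of \eqref{eqn:proportion-newspace} also vanishes (the bracket factor at $p=2$ is zero), so the convention $0\sim 0$ delivers the claim. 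All remaining cases follow directly from the dominant-main-term argument above.
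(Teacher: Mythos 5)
Your proposal is correct and follows essentially the same route as the paper: specialize Corollary \ref{cor:tr-tm-sgnpatt-estimate} (resp.\ Proposition \ref{prop:tr-estimate} at $Q=1,N$) to $m=1$, compare the resulting main terms with $\dim S_k(N)\sim\frac{k-1}{12}\psi(N)$ and $\dim S_k^\nw(N)\sim\frac{k-1}{12}\psi^\nw(N)$, and check the main term dominates the $O(N^{1/2+\varepsilon})$ error. Your explicit handling of the inadmissible case via Theorem \ref{thm:W4-sign} and the $0\sim0$ convention (with no circularity, since that theorem is proved directly from Proposition \ref{prop:S-Z-formula}) matches how the paper disposes of that case, and if anything is spelled out more carefully than in the paper's own proof.
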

\addtocounter{theorem}{-1}
}
\begin{proof}

First, substitute $m=1$ and $Q=1$ into Proposition \ref{prop:tr-estimate} to obtain the dimensions of the entire spaces:
\begin{align} 
    \label{eqn:entire-fullspace-dim}
    \dim S_k(N) &= \frac{k-1}{12} \psi(N) + O(N^{1/2+\varepsilon}), \\
    \label{eqn:entire-newspace-dim}
    \dim S_k^\nw(N) &= \frac{k-1}{12} \psi^\nw(N) + O(N^{1/2+\varepsilon}).
\end{align}

Now, for the first two claims, we use Proposition \ref{prop:tr-estimate} at $m=1$ and $Q=1,N$ to obtain
\begin{align}
    \dim S_k^+(N) 
    &= \frac12\lrp{ \Tr_{S_k(N)}W_1+\Tr_{S_k(N)}W_N} \\
    &= \frac{k-1}{12} \psi(N)  \frac{ 1 + \one_{N=1} }{2} + O(N^{1/2+\varepsilon}), \\ 
    \dim S_k^{\nw,+}(N) &= \frac12\lrp{ \Tr_{S_k^\nw(N)}W_1+\Tr_{S_k^\nw(N)}W_N} \\
    &= \frac{k-1}{12} \psi^\nw(N)  \frac{ 1 + \eta(N) }{2} + O(N^{1/2+\varepsilon}) \\
    &= \frac{k-1}{12} \psi^\nw(N) \frac{1+\one_{N = \text{cubefree square}}  \prod_{p|N} \frac{-1}{p^2-p-1}
    }{2} + O(N^{1/2+\varepsilon}).
\end{align}
Comparing these estimates with \eqref{eqn:entire-fullspace-dim} and \eqref{eqn:entire-newspace-dim} then yields the desired result.

For the last two claims, we substitute $m=1$ into Corollary \ref{cor:tr-tm-sgnpatt-estimate} to obtain
\begin{align}
    \label{eqn:tempSksigma-dim}
    \dim S_k^\sigma(N)&=\frac{1}{2^{\omega(N)}}\frac{k-1}{12}\psi(N)+O(N^{1/2+\varepsilon}),
    \\
    \dim S_k^{\nw,\sigma}(N)&=\frac{1}{2^{\omega(N)}}\frac{k-1}{12}\psi^{\nw}(N)\prod_{p^r||N}\lrp{1+\sigma(p^r)\frac{-\one_{r=2}}{p^2-p-1}}+O(N^{1/2+\varepsilon}).
\end{align}
Comparing these estimates with \eqref{eqn:entire-fullspace-dim} and \eqref{eqn:entire-newspace-dim} similarly yields the desired result. 
\end{proof}

Recall that we defined the notion of admissible sign patterns, where all sign patterns are admissible except for the case of $4 \| N$ with $\sigma(4) = +1$ over the newspace. We introduced this definition because the proportions \eqref{eqn:proportion-fullspace} and \eqref{eqn:proportion-newspace} are always nonzero except in this one exceptional case.

In fact, it turns out that $W_4$ always has Atkin-Lehner sign $-1$ over $S_k^\nw(N)$. This means that for inadmissible sign patterns $\sigma$, there are no forms at all in $S_k^\nw(N)$ with sign pattern $\sigma$. 

{ 
\renewcommand{\thetheorem}{\ref{thm:W4-sign}}
\begin{theorem}
    Let $N \ge 1$ with $4 \| N$. Then $W_4 f = -f$ for every $f \in S_k^\nw(N)$.
\end{theorem}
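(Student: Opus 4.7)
The plan is to establish the exact trace identity
\begin{align*}
    \Tr_{S_k^\nw(N)} W_4 = -\dim S_k^\nw(N)
\end{align*}
whenever $4 \| N$. Since $W_4$ is an involution on $S_k^\nw(N)$ with $\pm 1$-eigenspaces $V^\pm$, this identity forces $\dim V^+ = 0$, which is precisely the claim that $W_4 f = -f$ for every $f \in S_k^\nw(N)$. Note that the asymptotic estimates of Corollary \ref{cor:tr-tm-sgnpatt-estimate} already yield the weaker bound $\dim V^+ = O(N^{1/2+\varepsilon})$, since the main term vanishes when $\sigma(4) = +1$; the task is to upgrade this to exact vanishing by working with the unsmoothed formula in Proposition \ref{prop:S-Z-formula}.

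Write $N = 4M$ with $M$ odd. The divisors of $N$ then fall into triples $\{d, 2d, 4d\}$ indexed by $d \mid M$. Applying Proposition \ref{prop:S-Z-formula} with $m = 1$ for both $Q = 1$ and $Q = 4$, and using $\alpha(2) = \alpha(4) = -1$, the quantity $\dim S_k^\nw(N) + \Tr_{S_k^\nw(N)} W_4$ reduces to
\begin{align*}
    \sum_{d \mid M} \alpha(M/d) \Big[\, s'_{k,4d}(1,1) + s'_{k,4d}(1,4) - s'_{k,2d}(1,1) - s'_{k,2d}(1,2) - 2\, s'_{k,d}(1,1)\,\Big].
\end{align*}
It then suffices to show that the bracketed expression vanishes for every odd $d \mid M$, a purely local-at-$2$ identity among the Skoruppa-Zagier building blocks $s'_{k,N'}(1,Q)$.

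To verify this bracketed identity, I would expand each $s'_{k,N'}(1,Q)$ via its three defining pieces \eqref{eqn:s-k-N-term1}, \eqref{eqn:s-k-N-term2}, \eqref{eqn:s-k-N-term3} and check the cancellation part-by-part across the five pairs $(N',Q) \in \{(4d,4), (4d,1), (2d,2), (2d,1), (d,1)\}$. The cusp correction \eqref{eqn:s-k-N-term3} is nonzero only when $k=2$ and $N'/Q$ is a perfect square; these conditions track cleanly across the five pairs to produce cancellation. The divisor term \eqref{eqn:s-k-N-term2} involves $B(Q)$ and $B(N'/Q)$, and since the $2$-adic valuations of $Q$ and $N'/Q$ vary in a controlled way as $d$ is fixed and we move through the five pairs, these contributions also cancel. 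The main work is the class number piece \eqref{eqn:s-k-N-term1}: for each $s$ in the summation, one must pair the contribution $p_k(s/\sqrt{Q'}, 1)\, H_{N'/Q}(s^2 - 4Q')$ on the left (at levels $N' = 4d$) against contributions on the right (at levels $2d$ and $d$), relying on the fact that the Kronecker symbol $\big(\tfrac{\Delta/a^2 b^2}{(N'/Q)/a^2 b}\big)$ in $H_{N'/Q}(\Delta)$ captures the variation of the local behavior at $p = 2$ across the different levels $N'/Q \in \{d, 2d, 4d\}$.

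The main obstacle is the combinatorial matching in the class number sum \eqref{eqn:s-k-N-term1}: the Kronecker symbol depends delicately on $v_2(s^2 - 4Q')$ and on the 2-adic part of $N'/Q$, so pairing up $s$-values across the five $s'$ terms requires a careful case analysis based on the parity of $s$ and the value of $Q'$. Once this local-at-$2$ matching is carried out, the identity follows and the theorem is proved.
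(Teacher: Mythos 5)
Your setup is exactly the paper's: you reduce the claim to $\dim S_k^\nw(N)+\Tr_{S_k^\nw(N)}W_4=0$, apply Proposition \ref{prop:S-Z-formula} at $Q=1$ and $Q=4$, group the divisors of $N=4M$ into triples $\{d,2d,4d\}$ with $d\mid M$ odd, use $\alpha(2)=\alpha(4)=-1$ to pull out a common factor $\alpha(M/d)$, and arrive at precisely the paper's bracket $s'_{k,4d}(1,1)+s'_{k,4d}(1,4)-s'_{k,2d}(1,1)-s'_{k,2d}(1,2)-2s'_{k,d}(1,1)$, whose identical vanishing for odd $d$ is the whole content of the theorem. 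Up to this point everything you say is correct.

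The gap is that this vanishing is never actually established; it is only described as a plan. The easy pieces you assert do work out: with $m=1$ the cusp term \eqref{eqn:s-k-N-term3} contributes $\one_{k=2,\,d=\square}(1+3-0-2-2)=0$ and the gcd term \eqref{eqn:s-k-N-term2} contributes $B(d)\lrp{-1-1+\tfrac12+\tfrac12+1}\cdot(-1)\cdot(-2)$-type multiples summing to $0$, using $B(2d)=B(d)$ and $B(4d)=2B(d)$. But the class-number term \eqref{eqn:s-k-N-term1} is the heart of the matter, and your sketch both stops short of the computation and slightly misdescribes the mechanism: the cancellation is not a clean pairing of $s$-values mediated by the Kronecker symbol. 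Only the $H_d(-8)$ and $p_k(\sqrt2,1)H_d(-4)$ contributions cancel directly between $s'_{k,2d}(1,2)$ and $s'_{k,4d}(1,4)$; the remaining terms must be grouped by $p_k(t,1)$ and then evaluated explicitly from \eqref{eqn:H-N-def}, where one needs, for odd $d$, identities such as $2H_d(0)+H_{2d}(0)-H_{4d}(0)=\frac{-1}{12}(2d+2d-4d)=0$, $2H_d(-3)+H_{2d}(-3)-H_{4d}(-3)=0$ (using $\lrp{\frac{-3}{2}}=-1$, with the factor $\one_{3\nmid d}H(3)=\one_{3\nmid d}/3$), and $3H_d(-4)+H_{2d}(-4)-H_{4d}(-4)-H_d(-16)=\frac32\lrp{\frac{-4}{d}}-\frac32\lrp{\frac{-16}{d}}=0$. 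These evaluations, which the paper carries out term by term, are exactly where the theorem is proved; without them your argument gives only the asymptotic statement $\dim S_k^{\nw,\sigma}(N)=O(N^{1/2+\varepsilon})$ for $\sigma(4)=+1$ (which you correctly note), not the exact vanishing. So the route is the right one and would succeed, but as written the decisive identity is asserted rather than proven.
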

\addtocounter{theorem}{-1}
}
\begin{proof}
    It suffices to show that $\Tr_{S_k^\nw(N)} W_1 + \Tr_{S_k^\nw(N)} W_4 = 0$.
    Using Proposition \ref{prop:S-Z-formula}, we compute
    \begin{align}
        & \Tr_{S_k^\nw(N)} W_1 + \Tr_{S_k^\nw(N)} W_4 \\
        & = \sum_{N' | N} \alpha\left( \frac{N}{N'} \right) 
        \lrbiggb{ s'_{k, N'}(1, (1, N')) + s'_{k, N'}(1,(4, N')) } \\
        & = \sum_{L | \frac{N}{4}} \sum_{N' \in \{L, 2L, 4L\}} \alpha\left( \frac{N}{N'} \right) 
        \lrbiggb{ s'_{k, N'}(1, (1, N')) + s'_{k, N'}(1,(4, N')) } \\
        & = \sum_{L | \frac{N}{4}} 
        \bigg[ 
        \alpha\left( \frac{N}{L} \right) \Big[
            s'_{k, L}(1,1) + s'_{k, L}(1,1)
        \Big]
        +
        \alpha\left( \frac{N}{2L} \right) \Big[
            s'_{k, 2L}(1,1) + s'_{k, 2L} (1,2)
        \Big] \\
        & \quad \quad \quad \quad + \alpha \left( \frac{N}{4L} \right) 
        \Big[
            s'_{k, 4L}(1,1) + s'_{k, 4L}(1,4)
        \Big]
        \bigg] \\
        & = \sum_{L | \frac{N}{4}} \alpha \left( \frac{N}{4L} \right) 
        \lrbiggb{
        -2 s'_{k, L}(1,1) - s'_{k, 2L}(1,1) - s'_{k, 2L}(1,2) + s'_{k, 4L}(1,1) + s_{k, 4L}(1,4)
        }\\ 
        &=: \sum_{L | \frac{N}{4}} \alpha \left( \frac{N}{4L} \right) 
        \lrbiggb{
        *
        }. 
    \end{align}
    Now, we show that $[*] = 0$.
    Computing each of the terms appearing in $[*]$ (see \eqref{eqn:s-k-N-term1}-\eqref{eqn:s-k-N-term3} for the definition of $s'_{k,L}$) yields
    \begin{align*}
        s'_{k,L}(1,1) & = -\frac{1}{2} \lrb{p_k(0,1)H_L(-4) + 2p_k(1,1)H_L(-3) + 2p_k(2,1)H_L(0) + B(L) - 2\one_{\substack{k = 2 \\ L = \square}}}, \\
        s'_{k, 2L}(1,1) & = -\frac{1}{2} \lrBigb{p_k(0,1)H_{2L}(-4) + 2p_k(1,1)H_{2L}(-3) + 2p_k(2,1)H_{2L}(0) + B(L)}, \\
        s'_{k, 2L}(1,2) & = -\frac{1}{2} \bigg[ p_k(0,1)H_L(-4) + 2p_k(1,1) H_L(-3) + 2p_k(2,1) H_L(0) + p_k(0,1) H_L(-8) \\
        & \quad \quad \quad \quad + 2p_k(\sqrt{2}, 1) H_L(-4) + B(L) - 4 \one_{\substack{k = 2 \\ L = \square}} \bigg], \\
        s'_{k, 4L}(1,1) & = -\frac{1}{2} \lrb{p_k(0,1)H_{4L}(-4) + 2p_k(1,1) H_{4L}(-3) + 2p_k(2,1) H_{4L}(0) + 2B(L) - 2\one_{\substack{k = 2 \\ L = \square}}}, \\
        s'_{k, 4L}(1,4) & = -\frac{1}{2} \bigg[2p_k(1,1) H_L(-3) + p_k(0,1) H_L(-8) + 2p_k(\sqrt{2}, 1) H_L(-4) + p_k(0,1)H_L(-16) \\
        & \quad \quad \quad \quad  + 2p_k(2,1)H_L(0) + 2B(L) - 6 \one_{\substack{k = 2 \\ L = \square}} \bigg].
    \end{align*}
    Then grouping all the terms of $[*]$ by the $p_k(t,m)$ yields
    \begin{align}
        [*] =&  \; \frac{1}{2}p_k(0,1) 
        \lrBigb{3H_L(-4) + H_{2L}(-4) - H_{4L}(-4) - H_L(-16)} \\
        & + p_k(1,1) \lrBigb{ 2H_L(-3) + H_{2L}(-3) - H_{4L}(-3) } \\
        &  + p_k(2,1) \lrBigb{ 2H_L(0) + H_{2L}(0) - H_{4L}(0) }  \\
        = & \; \frac{1}{2}p_k(0,1) \left[ \frac{3}{2} \left( \frac{-4}{L} \right) - \frac{3}{2} \left( \frac{-16}{L} \right) \right] \\
        & + p_k(1,1) \frac{\one_{3 {\nmid} L}}{3} \left[ 2 \left( \frac{-3}{L} \right) + \left( \frac{-3}{2L} \right) - \left( \frac{-3}{4L} \right) \right] \\
        & + p_k(2,1) \frac{-1}{12} \lrBigb{ 2L + 2L - 4L } \\
        = & \; 0,
    \end{align}
    as desired.
\end{proof}

\section{\texorpdfstring{$\mu_p$}{mu\_p}-equidistribution of Hecke eigenvalues}
\label{sec:Hecke-equid}

In this section, we prove Theorem \ref{thm:main-theorem}. 
For a fixed prime $p$, let $\mu_p$ denote the $p$-adic Plancherel measure $\displaystyle \mu_p(x) =\frac{p+1}{\pi}\frac{(1-x^2/4)^{1/2}}{(p^{1/2}+p^{-1/2})^2-x^2}dx$. 
Then over the spaces $S_k^\sigma(N)$ and $S_k^{\nw,\sigma}(N)$ (for admissible sign patterns $\sigma$), 
we show that the Hecke eigenvalues 
for $T_p'$ are $\mu_p$-equidistributed as $N+k \to \infty$. 

{ 
\renewcommand{\thetheorem}{\ref{thm:main-theorem}}
\begin{theorem}
    Fix $p$ prime. Consider $N \ge 1$ coprime to $p$, $\sigma$ sign patterns for $N$, and $k \ge 2$ even. Then the Hecke eigenvalues of $T'_p |_{S_k^\sigma(N)}$ are $\mu_p$-equidistributed as $N+k \to \infty$. In other words, for any sequence
    $\{(N_i,k_i,\sigma_i)\}_{i \ge 1}$ 
    with $N_i + k_i \to \infty$, the collection of Hecke eigenvalues $\eigv(T'_m |_{S_{k_i}^{\sigma_i}(N_i)})$ is $\mu_p$-equidistributed as $i \to \infty$.
    Moreover, if we assume it is not the case that $4 \| N$ with $\sigma(4)=+1$, then the Hecke eigenvalues of $T'_p |_{S_k^{\nw,\sigma}(N)}$ are also $\mu_p$-equidistributed as $N+k \to \infty$.
\end{theorem}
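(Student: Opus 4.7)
The plan is to follow Serre's moment method \cite{Serre}. By Deligne's bound, all eigenvalues of $T'_p$ lie in the compact interval $[-2,2]$, so via Weierstrass approximation, $\mu_p$-equidistribution is equivalent to convergence of every polynomial moment. Thus it suffices to show, for each fixed integer $n \ge 0$, that
\begin{equation}
    \frac{\Tr (T'_p)^n |_{S_k^\sigma(N)}}{\dim S_k^\sigma(N)} \longrightarrow \int_{-2}^{2} x^n\, d\mu_p(x) \quad \text{as } N+k \to \infty,
\end{equation}
and analogously for $S_k^{\nw,\sigma}(N)$ in the admissible case.

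The first step is to reduce powers of $T'_p$ to traces of $T'_{p^j}$. Since $(p,N)=1$, the normalization of the standard Hecke relation gives $T'_p\, T'_{p^j} = T'_{p^{j+1}} + T'_{p^{j-1}}$ as operators on $S_k(N)$ (and on $S_k^\nw(N)$), which is precisely the Chebyshev recursion. Hence $T'_{p^j} = X_j(T'_p)$, where $X_0=1$, $X_1=x$, $X_{j+1}(x)=xX_j(x)-X_{j-1}(x)$. Expanding $x^n = \sum_{j=0}^{n} c(n,j)\, X_j(x)$ in this basis produces
\begin{equation}
    \Tr (T'_p)^n = \sum_{j=0}^{n} c(n,j)\,\Tr T'_{p^j}.
\end{equation}

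Next, I would apply Corollary \ref{cor:tr-tm-sgnpatt-estimate} with $m = p^j$, noting that $p^j$ is a square exactly when $j$ is even. This yields
\begin{equation}
    \Tr (T'_p)^n |_{S_k^\sigma(N)} = \frac{k-1}{12}\frac{\psi(N)}{2^{\omega(N)}} \sum_{\substack{0 \le j \le n \\ j \text{ even}}} c(n,j)\,p^{-j/2} \;+\; O_{n,p}\lrp{N^{1/2+\varepsilon}}.
\end{equation}
Dividing by $\dim S_k^\sigma(N) = \frac{k-1}{12}\frac{\psi(N)}{2^{\omega(N)}} + O(N^{1/2+\varepsilon})$, which is $\gg (k-1)\,N^{1-\varepsilon}$, the common constants cancel and the error vanishes as $N+k\to\infty$. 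A short computation with $x=2\cos\theta$ and a standard residue evaluation of $\int_0^{2\pi}\cos(m\theta)/(p+p^{-1}-2\cos\theta)\,d\theta$ gives the orthogonality relation $\int X_j\,d\mu_p = p^{-j/2}\,\one_{j\text{ even}}$, and combining with $x^n = \sum c(n,j) X_j$ identifies the limit of the ratio as $\int x^n\,d\mu_p$, as required.

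For the newspace assertion, the argument proceeds identically with $\psi$ replaced by $\psi^{\nw}$ and with the extra factor $\Pi_\sigma(N) := \prod_{p^r\|N}\lrp{1 + \sigma(p^r)\,\eta(p^r)}$ appearing as a common factor in both $\Tr T'_{p^j}|_{S_k^{\nw,\sigma}(N)}$ and $\dim S_k^{\nw,\sigma}(N)$; it cancels in the ratio. The admissibility hypothesis is precisely what allows the uniform lower bound \eqref{eqn:factor-of-5} to be invoked, guaranteeing $\Pi_\sigma(N)$ is bounded below by a positive absolute constant so that the main term genuinely dominates the $O(N^{1/2+\varepsilon})$ error. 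The only mild obstacle is keeping the implicit constants in the error and the Chebyshev coefficients $c(n,j)$ uniform in $(N,k,\sigma)$ for each fixed $n$ — which is automatic, since $c(n,j)$ is a universal constant and the big-$O$ bound in Corollary \ref{cor:tr-tm-sgnpatt-estimate} is uniform in $\sigma$.
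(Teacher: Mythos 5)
Your proposal is correct and follows essentially the same route as the paper: both reduce equidistribution to trace asymptotics via $T'_{p^j}=X_j(T'_p)$ and Corollary \ref{cor:tr-tm-sgnpatt-estimate} at $m=p^j$ (with admissibility and the bound \eqref{eqn:factor-of-5} ensuring the main term dominates in the newspace case), exactly as in Serre's method. The only cosmetic difference is that you verify monomial moments by expanding $x^n$ in the Chebyshev basis, whereas the paper tests the Chebyshev polynomials $X_n$ directly against $\mu_p$, using $\int X_n\,d\mu_p = p^{-n/2}\one_{n\ \mathrm{even}}$.
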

\addtocounter{theorem}{-1}
}
\begin{proof}
We just state the argument here for $S_k^{\nw,\sigma}(N)$, and an identical argument works for the full space $S_k^{\sigma}(N)$.

To show asymptotic $\mu_p$-equidistribution of the Hecke eigenvalues, it suffices (e.g. by Kuipers-Niederreiter \cite[Theorem 1.1]{KN}, Serre \cite[Proposition 2]{Serre}) to show that 
\begin{equation} \label{eqn:equid-cheby} 
\frac{1}{\dim S_k^{\nw,\sigma}(N)}\sum_{\lambda \in \text{eigv}\lrp{T'_p|_{S_k^{\nw,\sigma}(N)}}} X_n(\lambda) 
\longrightarrow
\int_{-2}^{2} X_n(x) \mu_p(x) \quad\text{as } N+k\to\infty
\end{equation}
for the Chebyshev polynomials $X_n(x) := U_n(x/2)$ for $n \ge 0$, since they span the space of all polynomials, which is a dense subspace of all continuous functions on $[-2,2]$. 

The integral on the right hand side of \eqref{eqn:equid-cheby} has value $p^{-n/2} \one_{\text{$n$ even}}$, by Serre \cite[Equation (20)]{Serre}. 

Additionally, it is well-known (see \cite[Lemme 1]{Serre}, for example) that
$$T_{p^n}' = X_n(T_p').$$
This means that
\begin{align}
    \text{LHS of \eqref{eqn:equid-cheby}} &= 
    \frac{1}{\dim S_k^{\nw,\sigma}(N)}\sum_{\lambda \in \text{eigv}\lrp{T'_p|_{S_k^{\nw,\sigma}(N)}}} X_n(\lambda) \\
    &= \frac{
        \Tr_{S_k^{\nw,\sigma}(N)} X_n(T'_{p})
    }{
        \Tr_{S_k^{\nw,\sigma}(N)} T'_{1}
    } \\
    &= \frac{
        \Tr_{S_k^{\nw,\sigma}(N)} T'_{p^n}
    }{
        \Tr_{S_k^{\nw,\sigma}(N)} T'_{1}
    } \\
    &\to p^{-n/2} \one_{\text{$n$ even}} \quad \text{as } N+k \to \infty,
\end{align}
 where for the last step, we used Corollary \ref{cor:tr-tm-sgnpatt-estimate} at $m=p^n$ and $m=1$ (since $\sigma$ is an admissible sign pattern). This verifies \eqref{eqn:equid-cheby}, completing the proof.
\end{proof}

\section{Applications}\label{sec:applications}

Here, we state some applications of Theorem \ref{thm:main-theorem} analogous to those in Serre \cite[\S6]{Serre}.

For each $N\geq1$ and $\sigma$ admissible sign patterns for $N$, choose an eigenbasis $\{f_i\}_{i=1}^{\dim S_k^\sigma(N)}$ of $S_k^\sigma(N)$ with respect to the Hecke operators $T_m$ satisfying $(m,N)=1$. 
For each $m$ coprime to $N$, let $x_{i,m}$ denote the $T_m$ eigenvalue of $f_i$.
Then for each $i$, let $K_i^\sigma := \Q(\{x_{i,m}\}_{(m,N) = 1})$ denote the totally real field generated by the $x_{i,m}$ (see Cohen-Str\"omberg \cite[Corollary 10.3.7(c), Remark 10.6.3(b)]{CS}).
Then let $s(N,k,\sigma)_r$ be the number of $i$ such that $[K_i^\sigma:\mathbb{Q}]=r$ (which is well-defined as the $K_i^\sigma$ are independent of the eigenbasis chosen, up to permutation (cf. Serre, \S6.1 \cite{Serre})).

Similarly for $S_k^{\nw, \sigma}(N)$, we have the newform basis $\{f^\nw_i\}_{i = 1}^{\dim S_k^{\nw, \sigma}(N)}$, and we define $K_i^{\nw, \sigma}$ and $s(N,k,\sigma)^\nw_r$ analogously.
Then the following improves a result of Serre \cite[Th\'eor\`eme 5]{Serre} (and Binder \cite[Corollary 10.0.5]{Binder}), by extending to the sign pattern spaces.

\begin{corollary} \label{cor:theorem-5}
    Fix $k\geq 2$ even, $p$ prime, and $r \geq 1$. Then for $N$ coprime to $p$ and $\sigma$ admissible sign patterns for $N$,
    \begin{align*}
        \frac{s(N, k, \sigma)_r}{\dim S_k^\sigma(N)} &\to 0  \qquad \text{as } N \to \infty,\\
        \frac{s(N, k, \sigma)^\nw_r}{\dim S_k^{\nw, \sigma}(N)} &\to 0 \qquad \text{as } N \to \infty.
    \end{align*}
\end{corollary}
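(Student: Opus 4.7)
The plan is to deduce the corollary from the $\mu_p$-equidistribution of Theorem \ref{thm:main-theorem} by applying the Portmanteau theorem to a finite set; the strategy mirrors Serre's proof of Th\'eor\`eme 5 in \cite{Serre}. Fix a prime $p$ coprime to $N$. If $f_i\in S_k^\sigma(N)$ is an eigenform with $[K_i^\sigma:\QQ]\le r$, then its $T_p$-eigenvalue $x_{i,p}$ lies in the totally real field $K_i^\sigma$, so is a totally real algebraic integer of degree at most $r$ over $\QQ$. Because $f_i$ is proportional to an oldform lift of some newform $g$ of level $M$ dividing $N$, with identical $T_p$-eigenvalue at the good prime $p\nmid N$, Deligne's bound gives $|x_{i,p}|\le 2p^{(k-1)/2}$; the same bound applies to every Galois conjugate of $x_{i,p}$, since these are $T_p$-eigenvalues of the Galois-conjugate newforms of $g$.

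By Northcott's theorem --- equivalently, by bounding the elementary symmetric functions of conjugates confined to a fixed real interval --- the set of totally real algebraic integers of degree at most $r$ whose conjugates all lie in $[-2p^{(k-1)/2},2p^{(k-1)/2}]$ is finite. Therefore the normalized eigenvalues $\lambda_{i,p}:=x_{i,p}/p^{(k-1)/2}$ associated to forms with $[K_i^\sigma:\QQ]\le r$ all lie in a finite set $F=F(r,k,p)\subset[-2,2]$. Since $\mu_p$ is absolutely continuous with respect to Lebesgue measure, $\mu_p(F)=0$. Theorem \ref{thm:main-theorem} gives weak convergence of the empirical measures of the $\lambda_{i,p}$ to $\mu_p$, so applying the Portmanteau theorem to the closed (because finite) set $F$ yields
\begin{equation*}
    \limsup_{N\to\infty}\frac{\#\{i:\lambda_{i,p}\in F\}}{\dim S_k^\sigma(N)}\le\mu_p(F)=0.
\end{equation*}
Since $\{i:[K_i^\sigma:\QQ]=r\}\subseteq\{i:\lambda_{i,p}\in F\}$, this forces $s(N,k,\sigma)_r/\dim S_k^\sigma(N)\to 0$. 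The newspace assertion follows by the same argument applied to the second half of Theorem \ref{thm:main-theorem}; the admissibility restriction there causes no issue, since inadmissible $\sigma$ give $\dim S_k^{\nw,\sigma}(N)=0$ by Theorem \ref{thm:W4-sign}, whence $s(N,k,\sigma)^\nw_r=0$ trivially.

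The proof rests almost entirely on Theorem \ref{thm:main-theorem}; there is no substantive new obstacle. The only points requiring care are (i) verifying that Deligne's bound, together with its Galois-equivariant form, applies to every Hecke eigenform in $S_k^\sigma(N)$ and not just to newforms (handled by reducing to the associated newform of level $M\mid N$ at the good prime $p\nmid N$), and (ii) observing that Portmanteau applies cleanly to the closed finite set $F$ because $\mu_p$ has no atoms.
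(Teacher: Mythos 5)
Your proposal is correct and follows essentially the same route as the paper: reduce to the finiteness of the set of totally real algebraic integers of degree at most $r$ with all conjugates bounded by $2p^{(k-1)/2}$ (Deligne's bound), observe that the corresponding normalized set in $[-2,2]$ has $\mu_p$-measure zero, and conclude from the $\mu_p$-equidistribution of Theorem \ref{thm:main-theorem}. The only cosmetic differences are that you invoke the Portmanteau theorem where the paper cites Serre's Proposition 1, and you spell out the oldform-to-newform reduction justifying Deligne's bound, which the paper leaves implicit.
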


\begin{proof}
    We show the proof only over the full space, as the proof over the newspace is identical. For $N$ coprime to $p$, let $s(N,k,\sigma,p)_r$ denote the number of indices $i$ such that $[\mathbb{Q}(x_{i,p}):\mathbb{Q}] \leq r$. Note that $s(N,k,\sigma,p)_r \geq s(N,k,\sigma)_r$, so it suffices to show $$\frac{s(N,k,\sigma,p)_r}{\dim S_k^\sigma(N)}\to 0$$ as $N\to \infty$ while $(N,p)=1$.
    Note that if $x=x_{i,p}$ satisfies $[\mathbb{Q}(x_{i,p}):\mathbb{Q}]\leq r,$ then $x$ also satisfies:
    \begin{enumerate}
        \item $x$ is a totally real algebraic integer of degree $\leq r$;
        \item For $\tau \in \text{Gal}(\overline{\QQ}/\QQ)$, the Galois conjugate $x^\tau$ of $x$ satisfies $|x^\tau|\leq 2p^{(k-1)/2}$. 
    \end{enumerate}
    Let $A(p,k,r)$ be the set of all $x$ satisfying conditions (1) and (2). Such a set must be finite, as the characteristic polynomials of its elements $x$ must have degree $\leq r$ and bounded integer coefficients. Let $A'\subseteq [-2,2]$ be the dilation of $A(p,k,r)$ under the mapping $x\mapsto \frac{x}{p^{(k-1)/2}}.$ The set $A'$ is finite, so it has $\mu_p$ measure $0$, and it is independent of $N$ and $\sigma$. Then since the $\frac{x_{i,p}}{p^{(k-1)/2}}$ are $\mu_p$-equidistributed as $N\to\infty$ by Theorem \ref{thm:main-theorem}, the proportion of $i$ such that $\frac{x_{i,p}}{p^{(k-1)/2}}\in A'$ (and hence $x_{i,p}\in A$) tends to $0$ as $N\to\infty$ (e.g. by Serre \cite[Proposition 1]{Serre}). This is equivalent to saying that $\frac{s(N,k,\sigma,p)_r}{\dim S_k^\sigma(N)}\to0$ as $N\to\infty,$ completing the proof.
\end{proof}

We note here that over the sign pattern newspaces $S_k^{\nw,\sigma}(N)$, we have $K_i^\sigma = \QQ(f_i)$, where $\{f_i\}_{i = 1}^{\dim S_k^{\nw,\sigma}(N)}$ is the newform basis for $\dim S_k^{\nw,\sigma}(N)$. 
From this perspective, Corollary \ref{cor:theorem-5} can be interpreted as a weak form of the Maeda Conjecture for general level.

Recall that the classical Maeda Conjecture states that $\deg \QQ(f) = \dim S_k(1)$ (i.e. is maximum possible) for each normalized Hecke eigenform $f \in S_k(1)$. For squarefree levels $N$, a corresponding conjecture has also been stated for the newspaces. Since the Hecke operators restrict to the $S_k^{\nw,\sigma}(N)$; for newforms $f \in S_k^\nw(N)$, the maximum possible degree of $\QQ(f)$ is $\dim S_k^{\nw,\sigma_f}(N)$, where $\sigma_f$ denotes the sign pattern for $f$. In particular, Tsaknias and Chow-Ghitza posited a version of the Maeda Conjecture for squarefree level: that for newforms $f \in S_k^\nw(N)$, $\deg \QQ(f) = \dim S_k^{\nw,\sigma_f}(N)$ for sufficiently large weight $k$ (see \cite[Conjecture 2.4]{Tsaknias} and \cite[Conjecture 5.2]{chow-ghitza}). 

For general level $N$, however, the picture is not as clear. In this case, there seem to be plenty of examples where $\deg \QQ(f)$ is not maximal. However, weaker forms of the Maeda Conjecture have still been investigated. For example, Roberts conjectured that for any fixed weight $k \ge 6$, $\deg \QQ(f) = 1$ for only finitely many newforms $f$ \cite[Conjecture 1.1]{Roberts2018}. Later, Martin made the same conjecture for arbitrary fixed degree $r$ \cite[Question 3]{MartinConjecture}. Here, Corollary \ref{cor:theorem-5} can be interpreted as a density $1$ version of these conjectures. In particular, for any fixed $r \ge 1$, Corollary \ref{cor:theorem-5} shows that the proportion of newforms $f \in S_k^{\nw,\sigma}(N)$ with $\deg \QQ(f) \le r$ tends to $0$ as $N \to \infty$ (along $N$ coprime to $p$).

We can also use Corollary \ref{cor:theorem-5} to obtain information about the \textit{largest} degree of $K_i^\sigma$ and $K_i^{\nw,\sigma}$. The following result follows immediately from Corollary \ref{cor:theorem-5}.

\begin{corollary}[cf. Serre, Th\'eor\`eme 6 \cite{Serre}] \label{cor:theorem-6}
    Fix $k \geq 2$ even and $p$ prime.
    Let $r(N,k, \sigma)$ be the maximum degree among the $K^\sigma_i$ associated with $(N,k,\sigma)$, where $\sigma$ is an admissible sign pattern for $N$, and similarly define $r(N, k, \sigma)^\nw$.
    Then for $N$ coprime to $p$ and $\sigma$ admissible sign patterns for $N$,
    \begin{align*}
        r(N,k, \sigma) &\to \infty  \qquad \text{as } N \to \infty, \\
        r(N,k, \sigma)^\nw &\to \infty  \qquad \text{as } N \to \infty.
    \end{align*}
\end{corollary}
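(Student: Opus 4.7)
The plan is to deduce this corollary as an immediate consequence of Corollary \ref{cor:theorem-5}. The strategy is the following: since Corollary \ref{cor:theorem-5} shows that the proportion of indices $i$ with $[K_i^\sigma:\QQ] \le r$ vanishes for each fixed $r$, as long as the total dimension $\dim S_k^\sigma(N)$ itself grows without bound, then for any given $r$ there must eventually be at least one index $i$ with $[K_i^\sigma:\QQ] > r$, forcing $r(N,k,\sigma) > r$.

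The only preparatory step is to verify that $\dim S_k^\sigma(N) \to \infty$ and $\dim S_k^{\nw,\sigma}(N) \to \infty$ as $N \to \infty$ (with $N$ coprime to $p$). Using the dimension estimates obtained from Corollary \ref{cor:tr-tm-sgnpatt-estimate} at $m=1$, the main terms are essentially $\frac{k-1}{12}\,\psi(N)/2^{\omega(N)}$ and $\frac{k-1}{12}\,\psi^\nw(N)/2^{\omega(N)} \cdot \prod_{p^r||N}\lrp{1 + \sigma(p^r)\tfrac{-\one_{r=2}}{p^2-p-1}}$, respectively. Since $\psi(N) \ge N$, $\psi^\nw(N) \gg N/\log N$ (a standard consequence of Mertens' theorem, using $\psi^\nw(p^r)/p^r \ge 1 - 1/p - 1/p^2$), and $2^{\omega(N)} = O(N^\varepsilon)$, and since the product factor for the newspace is uniformly bounded below by a positive constant for admissible sign patterns by \eqref{eqn:factor-of-5}, both dimensions grow at a rate that dominates the error term $O(N^{1/2+\varepsilon})$, and in particular tend to infinity as $N \to \infty$.

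Given this preparation, the argument is now straightforward. Fix any $r \ge 1$. By Corollary \ref{cor:theorem-5}, we have $s(N,k,\sigma)_r / \dim S_k^\sigma(N) \to 0$ as $N \to \infty$ along $N$ coprime to $p$. Combined with the fact that $\dim S_k^\sigma(N) \to \infty$, this yields $s(N,k,\sigma)_r < \dim S_k^\sigma(N)$ for all sufficiently large such $N$. Hence there exists at least one index $i$ with $[K_i^\sigma:\QQ] > r$, which gives $r(N,k,\sigma) > r$ for all sufficiently large $N$. Since $r$ was arbitrary, $r(N,k,\sigma) \to \infty$. The newspace case proceeds identically, using the newspace statement of Corollary \ref{cor:theorem-5} together with $\dim S_k^{\nw,\sigma}(N) \to \infty$.

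There is essentially no obstacle here; the argument is a direct logical consequence of Corollary \ref{cor:theorem-5} once the dimension growth is noted, and the dimension growth itself is a routine multiplicative-function estimate.
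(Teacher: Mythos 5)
Your proposal is correct and follows exactly the route the paper intends: the paper offers no separate argument, stating only that the result ``follows immediately from Corollary \ref{cor:theorem-5}'', and your write-up supplies the standard details (the growth of $\dim S_k^\sigma(N)$ and $\dim S_k^{\nw,\sigma}(N)$, already implicit in the remark after Corollary \ref{cor:tr-tm-sgnpatt-estimate}, plus the pigeonhole step). The only nit is that $s(N,k,\sigma)_r$ counts degree exactly $r$, so one should sum the finitely many vanishing proportions over degrees $\le r$ (as your opening paragraph in fact does) before concluding some $K_i^\sigma$ has degree $> r$.
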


 Now, let $J_0(N)$ denote the Jacobian of the modular curve $X_0(N)$. Note that the Atkin-Lehner involutions act naturally on $X_0(N)$, and thus also act on $J_0(N)$ by functoriality. Up to $\Q$-isogeny, we have the decomposition:
 \begin{align}
     J_0(N)\cong \prod_{\sigma} J^{\sigma}_0(N),
 \end{align}
 where $W_Q=\sigma(Q)$ on $J^{\sigma}_0(N)$ for all $Q||N$. Now, each $J_0^{\sigma}(N)$ can be decomposed into a product of $\Q$-simple factors:
 \begin{align}
     J_0^{\sigma}(N)\cong \prod A^\sigma_j.
 \end{align}
 
 Following the same argument as Serre (and Ribet \cite{Ribet-twists}), we see that the number of factors $A^\sigma_j$ with dimension $r$ is exactly $\frac{1}{r}s(N,2,\sigma)_r$ for each $r\ge1$. Now, Corollary \ref{cor:theorem-6} implies the following.

\begin{corollary}[cf. Serre, Th\'eor\`eme 7 \cite{Serre}]
    Fix $p$ prime.
    Let $r(N,\sigma)$ be the largest dimension of the simple $\Q$-factors of $J_0^\sigma(N)$, where $\sigma$ is an admissible sign pattern for $N$. 
    Then for $N$ coprime to $p$ and $\sigma$ admissible sign patterns for $N$,
    \begin{align*}
        r(N, \sigma) \to \infty  \qquad \text{as } N \to \infty.
    \end{align*}
\end{corollary}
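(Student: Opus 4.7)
The plan is to reduce the claim immediately to Corollary \ref{cor:theorem-6} at $k=2$, using the bijection between $\Q$-simple factors of $J_0^\sigma(N)$ and Galois orbits of Hecke eigenforms in $S_2^\sigma(N)$ that the authors have just recalled. Specifically, from the identity $\#\{j : \dim A_j^\sigma = r\} = \tfrac{1}{r}\,s(N,2,\sigma)_r$, one reads off that a simple $\Q$-factor of dimension $r$ appears in $J_0^\sigma(N)$ if and only if some Hecke eigenform $f_i \in S_2^\sigma(N)$ has $[K_i^\sigma : \Q] = r$. Taking the maximum over $r$ yields the pointwise identification
\[
r(N,\sigma) \;=\; r(N,2,\sigma),
\]
where the right-hand side is the quantity from Corollary \ref{cor:theorem-6} specialized to $k=2$.

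With this translation in hand, the desired limit is exactly the $k=2$ case of Corollary \ref{cor:theorem-6}, whose proof in turn passes through Corollary \ref{cor:theorem-5} and ultimately rests on the $\mu_p$-equidistribution established in Theorem \ref{thm:main-theorem}. No new analytic input is therefore required; one simply invokes the previous corollary at $k = 2$, along $N$ coprime to $p$.

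The only point that deserves a moment's care is the very first step: one must verify that the Eichler--Shimura correspondence really sends a Galois orbit of $r$ eigenforms with common admissible sign pattern $\sigma$ to a single $\Q$-\emph{simple} factor of dimension $r$ inside $J_0^\sigma(N)$ (and not, for instance, to $r$ factors of dimension $1$, as would happen after base change to $\overline{\Q}$). This is standard but worth noting: the abelian variety $A_{f_i}$ attached to $f_i$ is defined over $\Q$, the Atkin--Lehner involutions $W_Q$ are defined over $\Q$, and so each Galois-stable orbit contributes a single $\Q$-simple summand to the $\sigma$-isotypic piece $J_0^\sigma(N)$, of dimension equal to the orbit size. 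Once this dictionary is in place, the corollary is a one-line consequence of Corollary \ref{cor:theorem-6}.
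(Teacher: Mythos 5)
Your proposal is correct and matches the paper's own route: the paper likewise invokes the Serre--Ribet identity $\#\{j : \dim A_j^\sigma = r\} = \tfrac{1}{r}s(N,2,\sigma)_r$ to identify $r(N,\sigma)$ with $r(N,2,\sigma)$ and then deduces the statement directly from Corollary \ref{cor:theorem-6} at $k=2$ (along $N$ coprime to $p$). Your extra remark that each Galois orbit of size $r$ yields a single $\Q$-simple factor of dimension $r$ is exactly the standard point the paper delegates to Serre and Ribet, so no new input is needed.
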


\section*{Acknowledgements}
We would like to thank Kimball Martin for his helpful comments on this paper. We would also like to thank the referee for many insightful comments, and especially for pointing out the reference \cite{atkin-lehner} to Theorem \ref{thm:W4-sign}. 

This research was supported by NSF grant DMS-2349174. Research of Hui Xue is supported by  Simons Foundation Grant MPS-TSM-00007911.

\bibliographystyle{plain}
\bibliography{bib}

@misc{Assaf,
      title={A note on the trace formula}, 
      author={Eran Assaf},
      eprint={2311.03523},
      archivePrefix={arXiv},
      primaryClass={math.NT},
      note={https://arxiv.org/abs/2311.03523, 2024.}
}

@book {CS,
    AUTHOR = {Cohen, Henri and Str\"omberg, Fredrik},
     TITLE = {Modular forms},
    SERIES = {Graduate Studies in Mathematics},
    VOLUME = {179},
 PUBLISHER = {American Mathematical Society, Providence, RI},
      YEAR = {2017},
}

@article {CDF,
    AUTHOR = {Conrey, J. B. and Duke, W. and Farmer, D. W.},
     TITLE = {The distribution of the eigenvalues of {H}ecke operators},
   JOURNAL = {Acta Arith.},
  FJOURNAL = {Acta Arithmetica},
    VOLUME = {78},
      YEAR = {1997},
    NUMBER = {4},
     PAGES = {405--409},
}

@book {KN,
    AUTHOR = {Kuipers, L. and Niederreiter, H.},
     TITLE = {Uniform distribution of sequences},
    SERIES = {Pure and Applied Mathematics},
 PUBLISHER = {Wiley-Interscience [John Wiley \& Sons], New
              York-London-Sydney},
      YEAR = {1974},
}

@article {Serre,
    AUTHOR = {Serre, Jean-Pierre},
     TITLE = {R\'epartition asymptotique des valeurs propres de
              l'op\'erateur de {H}ecke {$T_p$}},
   JOURNAL = {J. Amer. Math. Soc.},
  FJOURNAL = {Journal of the American Mathematical Society},
    VOLUME = {10},
      YEAR = {1997},
    NUMBER = {1},
     PAGES = {75--102}, 
}

@article {Skoruppa1988,
    AUTHOR = {Skoruppa, Nils-Peter and Zagier, Don},
     TITLE = {Jacobi forms and a certain space of modular forms},
   JOURNAL = {Invent. Math.},
  FJOURNAL = {Inventiones Mathematicae},
    VOLUME = {94},
      YEAR = {1988},
    NUMBER = {1},
     PAGES = {113--146},
}

@article{CLAYTON2024186,
title = {Nonvanishing of second coefficients of Hecke polynomials},
journal = {Journal of Number Theory},
volume = {262},
pages = {186-221},
year = {2024},
issn = {0022-314X},
doi = {https://doi.org/10.1016/j.jnt.2024.03.014},
url = {https://www.sciencedirect.com/science/article/pii/S0022314X24000891},
author = {Archer Clayton and Helen Dai and Tianyu Ni and Hui Xue and Jake Zummo},
}

@article {kimball-2018-refined-dim-formula,
    AUTHOR = {Martin, Kimball},
     TITLE = {Refined dimensions of cusp forms, and equidistribution and
              bias of signs},
   JOURNAL = {J. Number Theory},
  FJOURNAL = {Journal of Number Theory},
    VOLUME = {188},
      YEAR = {2018},
     PAGES = {1--17},
}

@article {kimball-2025-distribution-local-signs,
    AUTHOR = {Martin, Kimball},
     TITLE = {Distribution of local signs of modular forms and murmurations
              of {F}ourier coefficients},
   JOURNAL = {Mathematika},
  FJOURNAL = {Mathematika. A Journal of Pure and Applied Mathematics},
    VOLUME = {71},
      YEAR = {2025},
    NUMBER = {3},
      ISSN = {0025-5793,2041-7942},
   MRCLASS = {99-06},
  MRNUMBER = {4915297},
       DOI = {10.1112/mtk.70028},
       URL = {https://doi.org/10.1112/mtk.70028},
}

@article {kimball-2023-root-number-bias,
    AUTHOR = {Martin, Kimball},
     TITLE = {Root number bias for newforms},
   JOURNAL = {Proc. Amer. Math. Soc.},
  FJOURNAL = {Proceedings of the American Mathematical Society},
    VOLUME = {151},
      YEAR = {2023},
    NUMBER = {9},
     PAGES = {3721--3736},
      ISSN = {0002-9939,1088-6826},
   MRCLASS = {11F11},
  MRNUMBER = {4607618},
MRREVIEWER = {Zhi\ Qi},
       DOI = {10.1090/proc/16463},
       URL = {https://doi.org/10.1090/proc/16463},
}

@article {ILS,
    AUTHOR = {Iwaniec, Henryk and Luo, Wenzhi and Sarnak, Peter},
     TITLE = {Low lying zeros of families of {$L$}-functions},
   JOURNAL = {Inst. Hautes \'Etudes Sci. Publ. Math.},
  FJOURNAL = {Institut des Hautes \'Etudes Scientifiques. Publications
              Math\'ematiques},
      YEAR = {2000},
     PAGES = {55--131},
      ISSN = {0073-8301,1618-1913},
   MRCLASS = {11M41 (11F66 11M26)},
  MRNUMBER = {1828743},
MRREVIEWER = {Philippe\ G.\ Michel},
       URL = {http://www.numdam.org/item?id=PMIHES_2000__91__55_0},
}

@misc{ross-equid,
      title={{H}ecke Eigenvalue Equidistribution over the Newspaces with Nebentypus}, 
      author={Erick Ross},
      note={\textit{To appear in the International Journal of Number Theory}, 2025.}
}

@article {chow-ghitza,
    AUTHOR = {Chow, Sam and Ghitza, Alexandru},
     TITLE = {Distinguishing newforms},
   JOURNAL = {Int. J. Number Theory},
  FJOURNAL = {International Journal of Number Theory},
    VOLUME = {11},
      YEAR = {2015},
    NUMBER = {3},
     PAGES = {893--908},
      ISSN = {1793-0421,1793-7310},
   MRCLASS = {11F11 (11F25)},
  MRNUMBER = {3327849},
MRREVIEWER = {Thomas\ R.\ Shemanske},
       DOI = {10.1142/S1793042115500499},
       URL = {https://doi.org/10.1142/S1793042115500499},
}

@article {GOT,
    AUTHOR = {Griffin, Michael and Ono, Ken and Tsai, Wei-Lun},
     TITLE = {Heights of points on elliptic curves over {$\Bbb{Q}$}},
   JOURNAL = {Proc. Amer. Math. Soc.},
  FJOURNAL = {Proceedings of the American Mathematical Society},
    VOLUME = {149},
      YEAR = {2021},
    NUMBER = {12},
     PAGES = {5093--5100},
      ISSN = {0002-9939,1088-6826},
   MRCLASS = {11G05 (11G50)},
  MRNUMBER = {4327417},
MRREVIEWER = {Duncan\ A.\ Buell},
       DOI = {10.1090/proc/15605},
       URL = {https://doi.org/10.1090/proc/15605},
}

@article {atkin-lehner,
    AUTHOR = {Atkin, A. O. L. and Lehner, J.},
     TITLE = {Hecke operators on {$\Gamma \sb{0}(m)$}},
   JOURNAL = {Math. Ann.},
  FJOURNAL = {Mathematische Annalen},
    VOLUME = {185},
      YEAR = {1970},
     PAGES = {134--160},
      ISSN = {0025-5831,1432-1807},
   MRCLASS = {10.20},
  MRNUMBER = {268123},
MRREVIEWER = {R.\ A.\ Rankin},
       DOI = {10.1007/BF01359701},
       URL = {https://doi.org/10.1007/BF01359701},
}

@misc{ross-code,
  author = {Ross, Erick},
  title = {Proportion of {A}tkin-{L}ehner signs},
  year = {2025},
  publisher = {GitHub},
  journal = {GitHub repository},
  note = {https://github.com/erick-ross/proportion-AL-signs, 2025}
}

@article {Ribet-twists,
    AUTHOR = {Ribet, Kenneth A.},
     TITLE = {Twists of modular forms and endomorphisms of abelian
              varieties},
   JOURNAL = {Math. Ann.},
  FJOURNAL = {Mathematische Annalen},
    VOLUME = {253},
      YEAR = {1980},
    NUMBER = {1},
     PAGES = {43--62},
      ISSN = {0025-5831,1432-1807},
   MRCLASS = {10D15 (14G13)},
  MRNUMBER = {594532},
MRREVIEWER = {Wen-Ch'ing\ Winnie\ Li},
       DOI = {10.1007/BF01457819},
       URL = {https://doi.org/10.1007/BF01457819},
}

@article {MartinConjecture,
    AUTHOR = {Martin, Kimball},
     TITLE = {An on-average {M}aeda-type conjecture in the level aspect},
   JOURNAL = {Proc. Amer. Math. Soc.},
  FJOURNAL = {Proceedings of the American Mathematical Society},
    VOLUME = {149},
      YEAR = {2021},
    NUMBER = {4},
     PAGES = {1373--1386},
      ISSN = {0002-9939,1088-6826},
   MRCLASS = {11F11 (11F30 11G18 11Y35)},
  MRNUMBER = {4242297},
MRREVIEWER = {Nicolas\ Billerey},
       DOI = {10.1090/proc/15328},
       URL = {https://doi.org/10.1090/proc/15328},
}

@Article{Roberts2018,
author={Roberts, David P.},
title={Newforms with rational coefficients},
journal={The Ramanujan Journal},
year={2018},
month={Aug},
day={01},
volume={46},
number={3},
pages={835-862},
issn={1572-9303},
doi={10.1007/s11139-017-9914-5},
url={https://doi.org/10.1007/s11139-017-9914-5}
}

@InProceedings{Tsaknias,
author="Tsaknias, Panagiotis",
title="A Possible Generalization of {M}aeda's Conjecture",
booktitle="Computations with Modular Forms",
year="2014",
publisher="Springer International Publishing",
pages="317--329",
doi = {https://doi.org/10.1007/978-3-319-03847-6_12}
}

@article {Binder,
    AUTHOR = {Binder, John},
     TITLE = {Fields of rationality of cusp forms},
   JOURNAL = {Israel J. Math.},
  FJOURNAL = {Israel Journal of Mathematics},
    VOLUME = {222},
      YEAR = {2017},
    NUMBER = {2},
     PAGES = {973--1028},
      ISSN = {0021-2172,1565-8511},
   MRCLASS = {11F25 (11F41)},
  MRNUMBER = {3722272},
MRREVIEWER = {Ivan\ Mati\'c},
       DOI = {10.1007/s11856-017-1610-z},
       URL = {https://doi.org/10.1007/s11856-017-1610-z},
}

\end{document}